\newtheorem{theorem}{Theorem}[section]
\newtheorem{proposition}[theorem]{Proposition}
\newtheorem{corollary}[theorem]{Corollary}
\theoremstyle{remark} \newtheorem{remark}[theorem]{Remark}
\begin{document}

\title[]{Finite Decomposition of Minimal surfaces, Maximal surfaces, Time-like Minimal surfaces  and Born-Infeld solitons}

\author{Rukmini Dey*}

\vskip 2mm

\address{I.C.T.S.-T.I.F.R., Bangalore*}

\author{Kohinoor Ghosh**}

\vskip 2mm

\address{I.C.T.S.-T.I.F.R., Bangalore**}

\author{ Sidharth Soundararajan***}

\vskip 2mm

\address{I.I.Sc., Bangalore***}

\begin{abstract}

We show that the  height function of Scherk's second  surface decomposes into a finite sum of scaled and translated versions of itself, using an Euler Ramanujan identity.  A similar result appears in R. Kamien's work on liquid crystals where he shows (using an Euler-Ramanujan identity) that the Scherk's first surface decomposes into a finite sum of scaled and translated versions of itself. We give another  finite decomposition of the height function of the Scherk's first surface in terms of translated  helicoids and scaled and translated Scherk's first surface. We give some more examples, for instance   a (complex) maximal surface and a (complex)  BI soliton. We  then show, using the Weierstrass-Enneper representation of minimal (maximal) surfaces, that one can decompose the height function of a minimal (maximal) surface into finite sums of height functions of surfaces which, upon  change of coordinates, turn out to be  minimal (maximal) surfaces, each minimal (maximal) w.r.t. to its own new coordinates.  We then exhibit a general property of minimal surfaces, maximal surfaces, timelike minimal surfaces and Born-Infeld soliton surfaces that their local height functions $z=Z(x,y)$  split into finite sum of scaled and translated versions of functions of the same form. Upto scaling  these new functions are height functions of the minimal surfaces, maximal surfaces, timelike minimal surfaces and Born-Infeld soliton surfaces respectively. Lastly, we exhibit a foliation of ${\mathbb R}^3$ minus certain lines by shifted helicoids (which appear in one of the Euler-Ramanujan identities).
\end{abstract}

\maketitle

\section{Introduction}

Let ${\mathbb E}^3$ be Euclidean  $3$-space with metric $ds^2 = dx^2 + dy^2 + dz^2$,  ${\mathbb L}^3 $ be Lorentz-Minkowski $3$-space with metric $ds^2 = dx^2 + dy^2 - dz^2$ and ${\mathbb L}^{\prime 3}  $ be the Lorentz-Minkowski $3$-space with $ds^2 = dx^2 - dy^2 +dz^2 $. 

The minimal surface equation for a graphical surface $(x,y, z=z(x,y))$, given by 
$(1+z_x^2) z_{yy} - 2 z_x z_y z_{xy} + (1 + z_y^2) z_{xx} =0$, is the zero mean curvature (ZMC) equation in ${\mathbb E}^3$.  

The maximal surface equation for a graphical surface $(x,y,z=z(x,y)) $ is  given by  $(1-z_x^2) z_{yy} + 2 z_x z_y z_{xy} + (1 - z_y^2) z_{xx} =0$. It is  the zero mean curvature (ZMC) equation in ${\mathbb L}^3$ for a graph over a spacelike plane. 

The Born-Infeld soliton graphical surfaces satisfy the equation $(1-z_y^2) z_{xx}  + 2 z_x z_y z_{xy} - (1+z_x^2) z_{yy} =0$, ~\cite{Wi}. It is the ZMC equation in ${\mathbb L}^{\prime 3}$ for surfaces which are graphs over timelike planes, ~\cite{DeSi}.

The ZMC  equations, when written in terms of the height functions $z=z(x,y)$ are non-linear and we cannot expect the height functions to decompose into finite sums of height functions of zero mean curvature surfaces.  In this article we exhibit that the height functions of various minimal, maximal, timelike minimal and Born-Infeld solitons decompose into sum of translated and scaled versions of minimal, maximal, timelike minimal and BI solitons respectively.

In R. Kamien's work on liquid crystals  ~\cite{Ka} the author decomposes the height function of Scherk's surface of first  kind  in a finite sum of scaled versions of the same type. 
He uses an Euler-Ramanujan identity for this.

He denotes  $ z=h[x,y: \alpha] \equiv -\text{sec}(\frac{1}{2} \alpha) \text{tan}^{-1} \left (  \frac{\text{tanh}[ \frac{1}{2} x \text{sin}(\alpha)]}{\text{tan}[y \text{sin}(\frac{1}{2} \alpha)]} \right ) $.
Using the E-R identity ~\cite{Br}

$$\text{tan}^{-1}[\text{tanh}(a) \text{cot} (b) ] = \sum_{k=-\infty}^{\infty}  \text{tan}^{-1}  ( \frac{a}{b + k \pi}). $$
 Turning it into a finite sum, Kamien shows that 

$$h[x \text{sec} \beta , y ; 2 \beta] = \frac{\text{cos} \tilde{\beta}}{\text{cos} \beta} \sum_{m=0}^{n-1} h[x \text{sec}\tilde{\beta}, y + \frac{m}{n} \pi \text{csc} \tilde{\beta}; 2 \tilde{\beta}]$$ where $\text{sin} \beta = n \text{sin}\tilde{\beta}.$

We use another E-R  identity (which we had used in ~\cite{De} and ~\cite{DeSi}) to give examples of finite decomposition   in the same spirit as Kamien's example.
We  decompose height functions of Scherk's minimal surface of second  kind, i.e. $z = \ln \left( \frac{\text{cos}(x)}{\text{cos}(y)} \right) $ using this E-R identity.   Then we have in theorem (~\ref{scherk})
 $$z(x,y)  = \sum_{m=0}^{n-1} z \left(\frac{x}{n} - c(m), \frac{y}{n} - c(m) \right)$$ where $n$ is any integer and $c(m) = \frac{(2m-n+1)}{2n}\pi.$

We  give a different finite decomposition of the height function of the Scherk's first surface in terms of height functions of translated  helicoids and scaled and translated Scherk's first surface.

We then show using the Weierstrass-Enneper representation of minimal (maximal) surfaces, one can decompose the height function of a minimal (maximal) surface into finite sums of height functions of surfaces which, upon a change of coordinates, turn out to be a minimal (maximal) surface.

We also exhibit a general  property of minimal surfaces in ${\mathbb E}^3$, maximal surfaces in ${\mathbb L}^3 $,  and BI-soliton surfaces in ${\mathbb L}^{\prime 3}$ and timelike minimal surfaces. If locally they are written as a graph of a function, say $(x ,y, z) $ such that  $z = Z(x,y)$,
one can decompose the height function, $$z=Z(x,y)  = \frac{1}{C_n} \sum_{m=1}^{n} Z_{m}(a_mx + b_m , a_my + d_m)$$ where $z=  Z_{m}(x,y)$ is again a surface of the same type, namely scaled and translated minimal surfaces, maximal surfaces, timelike minimal surfaces and Born-Infeld solitons respectively.   In fact $Z_m(x,y) = \frac{1}{c_m} Z(\frac{x-b_m}{a_m}, \frac{y-d_m}{a_m})$ and $C_n = \sum_{m=1}^{n}\frac{1}{c_m}$ is a constant, $c_m$ is a sequence of non-zero real numbers and $z=\alpha_{m} Z_m(x,y)$ is a minimal surface, maximal surface, timelike minimal surface or BI soliton respectively.  We show this using  the Weierstrass-Enneper representation of minimal surfaces, maximal surfaces, timelike minimal surfaces  respectively ~\cite{DHKW}, ~\cite{Ki}, ~\cite{Le}   and the Barbishov-Charnikov representation of  BI-soliton surfaces ~\cite{Wi}.  
Note that our method extends to  cases when the surface can be written as $x=X(y,z)$ or $y = Y(x,z)$. There is nothing special about the $z$ coordinate.

We use the term complex minimal, complex maximal and complex   BI soliton surfaces for solutions of the respective equations  (~\ref{minsurf}), (~\ref{maxsurf}), (~\ref{(BI)}) with $(x,y,z)$ complex.  A  complex maximal surface can be obtained from the complex minimal surface equation by $x \rightarrow ix$ and $y \rightarrow iy$.  A complex Scherk's maximal surface of second kind is given by  $z = \ln \left( \frac{\text{cosh}(x)}{\text{cosh}(y)} \right) $ (where we allow $(x,y,z)$  to be complex.)  We have in  proposition (~\ref{scherkmax})
$$z(x,y)=\sum_{m=0}^{n-1} z \left(\frac{x}{n}+ic(m),\frac{y}{n}+ic(m) \right)$$ where $c(m) = \frac{2m-n+1}{2n}\pi.$

A complex BI-soliton surface equation can be obtained from the complex  minimal surface equation by a Wick rotation $y \rightarrow i y$ and thus $z= \ln \left(  \frac{\text{cosh}(y)}{\text{cos}(x)} \right) $ is a complex  BI-soliton surface. 
We have in proposition (~\ref{scherkBI}) $$ z(x,y) =\sum_{m=0}^{n-1} z\left( \frac{x}{n}- c(m), \frac{y}{n}+ic(m)\right) $$ where $ c(m) = \frac{(2m-n+1)}{2n}\pi.$

 Finally we exhibit a  foliation of ${\mathbb R}^3$ minus certain lines  by shifted helicoids (which appear in one of the Euler-Ramanujan identities).

The results in this paper may have application in the theory of liquid crystals, especially in analysing  screw dislocaltions and topological defects ~\cite{Ka}, ~\cite{KaLu}.

\section{{\bf Finite decomposition of the height function of the Scherk's surface of second kind and the helicoid}}

\subsection{Decomposition of height function of the Scherk's surface of the second kind}

Let us consider the decomposition of Scherk's surface of first kind ~\cite{Ka}.

We show a similar decomposition of Scherk's surface of second kind using an E-R identity used  in ~\cite{De}.

\begin{theorem}\label{scherk}
Let $z(x,y) = \ln \left(\frac{\text{cos}(y)}{\text{cos}(x)}\right)$ be the height function of the Scherk's second minimal surface. Then we have $z(x,y)  = \sum_{m=0}^{n-1} z \left(\frac{x}{n} - c(m), \frac{y}{n} - c(m) \right)$, where $n$ is any integer and $c(m) = \frac{(2m-n+1)}{2n}\pi$.
\end{theorem}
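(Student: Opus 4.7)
The plan is to reduce the identity to the classical product formula for cosines at equally spaced arguments. Since $z(u,v) = \ln\cos v - \ln\cos u$, the right-hand side splits as
\begin{equation*}
\sum_{m=0}^{n-1} z\!\left(\frac{x}{n}-c(m),\, \frac{y}{n}-c(m)\right) \;=\; \ln\!\prod_{m=0}^{n-1}\cos\!\left(\frac{y}{n}-c(m)\right)\;-\;\ln\!\prod_{m=0}^{n-1}\cos\!\left(\frac{x}{n}-c(m)\right),
\end{equation*}
so the entire content of the theorem is the evaluation of $P(\theta) := \prod_{m=0}^{n-1}\cos(\theta-c(m))$ as a scalar multiple of $\cos(n\theta)$, with a $\theta$-independent prefactor that will cancel between numerator and denominator.

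The second step is to notice that $c(m) = -\frac{(n-1)\pi}{2n} + \frac{m\pi}{n}$, so the phases $\theta-c(m)$ are in arithmetic progression with common difference $\pi/n$. Setting $\psi = \theta + \frac{(n-1)\pi}{2n}$ gives $P(\theta) = \prod_{m=0}^{n-1}\cos\!\left(\psi - \frac{m\pi}{n}\right)$. Using $\cos\alpha = \sin(\pi/2 - \alpha)$ and a relabeling of the index, this becomes $\prod_{k=0}^{n-1}\sin\!\left(x + \frac{k\pi}{n}\right)$ with $x = \frac{\pi}{2n} - \theta$, and the classical identity
\begin{equation*}
\prod_{k=0}^{n-1}\sin\!\left(x+\frac{k\pi}{n}\right) \;=\; \frac{\sin(nx)}{2^{n-1}}
\end{equation*}
(derivable from $w^n - 1 = \prod_{k=0}^{n-1}(w - e^{2\pi i k/n})$ at $w = e^{2ix}$) yields $P(\theta) = \frac{\cos(n\theta)}{2^{n-1}}$ since $nx = \pi/2 - n\theta$.

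Applying this with $\theta = y/n$ and with $\theta = x/n$, the two factors of $2^{1-n}$ cancel between numerator and denominator, leaving exactly $\ln(\cos y/\cos x) = z(x,y)$, as desired. The main obstacle is the phase book-keeping when passing from the sine product to the cosine product: one must verify that the chosen $\psi$ lines up the shifts $c(m)$ into an exact arithmetic progression starting at $0$, and that the substitution $x = \pi/(2n) - \theta$ sends $\sin(nx)$ precisely to $\cos(n\theta)$ with no spurious sign. Once this is tracked, the proof collapses to a one-line calculation. Alternatively, following Kamien's strategy and the authors' earlier use of the Euler--Ramanujan identity in \cite{De}, the same evaluation of $P(\theta)$ can be recovered by truncating an infinite Euler--Ramanujan product expansion of $\cos(n\theta)$ into scaled and shifted factors, which is the viewpoint the paper emphasizes.
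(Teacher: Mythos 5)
Your proof is correct, but it takes a genuinely different and more elementary route than the paper's. The paper starts from the Euler--Ramanujan infinite product for $\cos(X+A)/\cos A$, takes logarithms to write $z(x,y)$ as an infinite series indexed by $k$, reindexes via $k=np+m$, and uses the symmetry $c(n-1-m)=-c(m)$ to regroup the doubly indexed series into a finite sum of translated and scaled copies of the original series; this requires reordering one of the inner sums and interchanging the finite and infinite summations. You instead reduce everything to the finite trigonometric identity $\prod_{k=0}^{n-1}\sin\left(x+\tfrac{k\pi}{n}\right)=2^{1-n}\sin(nx)$, observing that the shifts $c(m)=\tfrac{m\pi}{n}-\tfrac{(n-1)\pi}{2n}$ form an exact arithmetic progression of step $\pi/n$, so that $\prod_{m=0}^{n-1}\cos(\theta-c(m))=2^{1-n}\cos(n\theta)$ and the constant $2^{1-n}$ cancels between the $y$-product and the $x$-product. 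Your phase bookkeeping checks out: the substitution $x=\tfrac{\pi}{2n}-\theta$ gives $nx=\tfrac{\pi}{2}-n\theta$, hence $\sin(nx)=\cos(n\theta)$ with no stray sign, and the $n=2,3$ cases confirm the normalization. What your argument buys is brevity and the complete avoidance of infinite series (and hence of the convergence and rearrangement justifications the paper leaves implicit); what the paper's argument buys is the explicit link to the Euler--Ramanujan identities, which is the organizing theme of the article and the mechanism reused for its other decompositions. The one caveat, common to both proofs, is the domain: each summand $z\left(\tfrac{x}{n}-c(m),\tfrac{y}{n}-c(m)\right)$ must be defined, i.e.\ each ratio of cosines must be positive (or a fixed branch of the logarithm must be used), and $n$ should be read as a positive integer.
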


\begin{proof}
Let $z(x,y) = \ln \left(\frac{\text{cos}(y)}{\text{cos}(x)}\right)$.

We have Ramanujan's identity, ~\cite{Br}, Example $(1)$ page $38$, where 
$X$, $A$ are complex, $A$ is not an odd multiple of $\pi /2$:

$\frac{{\rm cos}(X+A)}{{\rm cos} (A)} = \Pi_{k=1}^{\infty} \left( \left (  1-\frac{X}{(k - \frac{1}{2} \pi) -A} \right )  \left(  1 + \frac{X}{(k - \frac{1}{2} \pi) +A} \right) \right) $.

As in ~\cite{De} we take ln on both sides, to get:

\begin{equation*}
\begin{split}
{\rm ln} \left ( \frac{{\rm cos} (X+A)}{{\rm cos} (A)}\right ) 
&=  \sum_{k=1}^{\infty} {\rm ln} \left( \left ( 1-\frac{X}{(k - \frac{1}{2}) \pi - A} \right)  \left(  1 + \frac{X}{(k - \frac{1}{2}) \pi +A} \right) \right )  \\
&=   \sum_{k=1}^{\infty} {\rm ln} \left(  \left( \frac{ (k - \frac{1}{2} \pi ) -(X + A)}{(k - \frac{1}{2}) \pi - A} \right )  \left(   \frac{ (k - \frac{1}{2}) \pi + (X + A) }{(k - \frac{1}{2}) \pi +A} \right) \right )  
\end{split}
\end{equation*}

The Scherk's second surface is given by 
$ z = {\rm ln} \left ( \frac{{\rm cos}(y)}{{\rm cos}(x)}\right ) $.

Let $X+A = y $ and $A=x$ in Ramanujan's identity.

Then, if $x$ is not an odd multiple of $\frac{\pi}{2}$, we have, 

$$
\ln \left(\frac{{\rm cos}(y)}{{\rm cos}(x)} \right) = \sum_{k=1}^{\infty} {\rm ln} \left(  \left( \frac{ y-(k - \frac{1}{2}) \pi )}{x - (k - \frac{1}{2})\pi} \right) \left(  \frac{y + (k - \frac{1}{2}) \pi}{x + (k - \frac{1}{2})\pi} \right ) \right) 
$$
For $x$ not an odd multiple of $\frac{\pi}{2}$, we have (from Euler-Ramanujan's identity) 

\begin{equation*}
    \begin{split}
        z(x,y) &= \sum_{k=0}^\infty \ln \left( \frac{(2y - (2k+1)\pi)(2y+(2k+1)\pi)}{(2x-(2k+1)\pi)(2x+(2k+1)\pi)} \right)
    \end{split}
\end{equation*}

For convenience, let $f(y,x,k) = \frac{2y - (2k+1)\pi}{2x-(2k+1)\pi}$ and let $g(y,x,k) = \frac{2y + (2k+1)\pi}{2x+(2k+1)\pi}$, so that

$$z(x,y) = \sum_{k=0}^\infty \ln(f(y,x,k)g(y,x,k))$$

Rewriting $k = np + m$ for arbitrary fixed n, $0 \leq m \leq n-1$, we have

\begin{equation*}
    \begin{split}
        z(x,y) &= \sum_{p=0}^\infty \sum_{m=0}^{n-1} \ln \left( \frac{(2y - (2(np+m)+1)\pi)(2y+(2(np+m)+1)\pi)}{(2x-(2(np+m)+1)\pi)(2x+(2(np+m)+1)\pi)} \right)\\
        & = \sum_{p=0}^\infty \sum_{m=0}^{n-1} \ln \left( \frac{(2y - 2m\pi - (2np+1)\pi)(2y+ 2m\pi + (2np+1)\pi)}{(2x- 2m\pi-(2np+1)\pi)(2x+2m\pi + (2np+1)\pi)} \right)\\
        & = \sum_{p=0}^\infty \sum_{m=0}^{n-1} \ln \left( \frac{(2y - 2m\pi +(n-1)\pi - n(2p+1)\pi)(2y+ 2m\pi -(n-1)\pi + n(2p+1)\pi)}{(2x- 2m\pi+(n-1)\pi -n(2p+1)\pi)(2x+2m\pi -(n-1)\pi + n(2p+1)\pi)} \right)\\
    \end{split}
\end{equation*}

Let $c(m) = \frac{(2m-n+1)}{2n}\pi$ . Then

\begin{equation*}
    \begin{split}
        z(x,y) & = \sum_{p=0}^\infty \sum_{m=0}^{n-1} \ln \left( \frac{(\frac{2y}{n} - 2c(m) - (2p+1)\pi)(\frac{2y}{n}+ 2c(m) + (2p+1)\pi)}{(\frac{2x}{n}- 2c(m) -(2p+1)\pi)(\frac{2x}{n}+2c(m) + (2p+1)\pi)} \right)\\ 
        & = \sum_{p=0}^\infty \sum_{m=0}^{n-1} \ln \left( f \left(\frac{y}{n} - c(m), \frac{x}{n} - c(m), p \right ) g \left( \frac{y}{n} +c(m), \frac{x}{n} + c(m), p \right) \right)
    \end{split}
\end{equation*}

Now, $c(n-1-m) = \frac{2(n-1-m) -n +1}{2n}\pi = \frac{2n - 2 -2m -n +1}{2n}\pi = \frac{n-1-2m}{2n}\pi = -c(m)$

On expanding the finite summation, we get a sum of natural log of  $f$ and that of $g$. We can reverse the order of finite summation for the part involving $g$  and rewrite the same expression as follows.

\begin{equation*}
    \begin{split}
        z(x,y) &= \sum_{p=0}^\infty \sum_{m=0}^{n-1} \ln \left( f \left(\frac{y}{n} - c(m), \frac{x}{n} - c(m), p \right ) g \left( \frac{y}{n} +c(n-1-m), \frac{x}{n} + c(n-1-m), p \right) \right)\\
        & = \sum_{p=0}^\infty \sum_{m=0}^{n-1} \ln \left( f \left(\frac{y}{n} - c(m), \frac{x}{n} - c(m), p \right ) g \left( \frac{y}{n} -c(m), \frac{x}{n} -c(m), p \right) \right)\\
         & = \sum_{m=0}^{n-1} z \left(\frac{x}{n} - c(m), \frac{y}{n} - c(m) \right)
    \end{split}
\end{equation*}
interchanging the order of the infinite and the finite sum.

\end{proof}

\subsection{{\bf Finite Decomposition of the height function of the  Scherk's first surface }}

In this section we give a finite decomposition of the height function of the Scherk's first surface in terms of translated  helicoids and scaled and translated Scherk's first surface. It is a different decomposition from ~\cite{Ka}.

\begin{theorem}
\begin{eqnarray*}
    tan^{-1} tanh (y) cot (x) & = \sum_{m=1}^{n-1} tan^{-1} (tanh (y/n)cot ((x + m \pi)/n)) - tan^{-1} \left(\frac{y/n}{(x + m \pi)/n} \right) \nonumber \\
		&  +  tan^{-1}( tanh (y/n)cot (x/n ))  - \sum_{m=1}^{n-1} tan^{-1} \left( \frac{y/n}{(x+ m \pi)/n - \pi} \right)  \\& + \sum_{m=1}^{n-1} tan^{-1} \left( \frac{y}{x + m \pi } \right) +  \sum_{m=1}^{n-1} tan^{-1} \left( \frac{y}{x - m \pi } \right)
\end{eqnarray*}
\end{theorem}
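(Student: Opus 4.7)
The plan is to apply the Euler--Ramanujan identity $\tan^{-1}(\tanh(a)\cot(b)) = \sum_{k=-\infty}^{\infty} \tan^{-1}(a/(b+k\pi))$ from \cite{Br} twice: once to expand the left-hand side into an infinite sum of elementary arctangents, and once more (in the reverse direction) within each residue class modulo $n$ to repackage the terms as scaled and translated Scherk's first surface height functions.

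Concretely, setting $a = y$ and $b = x$ gives
$$\tan^{-1}(\tanh(y)\cot(x)) = \sum_{k=-\infty}^{\infty} \tan^{-1}\!\left(\frac{y}{x+k\pi}\right).$$
I would then partition the integers via $k = np + m$ with $m \in \{0,1,\ldots,n-1\}$ and $p \in \mathbb{Z}$, and rewrite
$$\frac{y}{x+(np+m)\pi} = \frac{y/n}{(x+m\pi)/n + p\pi}.$$
For each fixed $m$, the inner sum over $p$ matches the Euler--Ramanujan identity with $a = y/n$, $b = (x+m\pi)/n$, and so equals $\tan^{-1}(\tanh(y/n)\cot((x+m\pi)/n))$. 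This yields the clean ``simple'' decomposition
$$\tan^{-1}(\tanh(y)\cot(x)) = \sum_{m=0}^{n-1} \tan^{-1}\!\left(\tanh(y/n)\cot((x+m\pi)/n)\right). \qquad (\ast)$$

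To match the statement of the theorem I would then show that its right-hand side collapses to $(\ast)$ after two cancellations. Using $\frac{y/n}{(x+m\pi)/n} = \frac{y}{x+m\pi}$, the helicoid terms subtracted inside the first sum over $m=1,\ldots,n-1$ exactly cancel the fifth sum $\sum_{m=1}^{n-1}\tan^{-1}(y/(x+m\pi))$. Likewise $\frac{y/n}{(x+m\pi)/n - \pi} = \frac{y}{x-(n-m)\pi}$, so re-indexing $m \mapsto n-m$ in the fourth sum turns it into $-\sum_{m=1}^{n-1}\tan^{-1}(y/(x-m\pi))$, which cancels the sixth sum. What survives is exactly $\tan^{-1}(\tanh(y/n)\cot(x/n)) + \sum_{m=1}^{n-1}\tan^{-1}(\tanh(y/n)\cot((x+m\pi)/n))$, i.e.\ the right-hand side of $(\ast)$.

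The main obstacle is bookkeeping rather than any deeper analytic difficulty: one must carefully track the signs of the six component sums, perform the re-indexing $m \mapsto n-m$ in the $p=-1$ terms, and (as in the proof of Theorem \ref{scherk}) justify the interchange of the finite and infinite summations when re-applying the Euler--Ramanujan identity inside each residue class. The stated form, though equivalent to $(\ast)$, is the one of interest geometrically because it exhibits the Scherk's first surface explicitly as a sum of scaled and translated Scherk's first surfaces together with translated helicoids $\tan^{-1}(y/(x \pm m\pi))$.
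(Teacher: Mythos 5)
Your proof is correct, and it reaches the stated identity by a cleaner route than the paper's. The paper works with the one-sided form of the Euler--Ramanujan identity, $\tan^{-1}(\tanh y\cot x)=\tan^{-1}(y/x)+\sum_{k\ge 1}\tan^{-1}(y/(x+k\pi))+\sum_{k\ge 1}\tan^{-1}(y/(x-k\pi))$, splits $k=np+m$ with $p\ge 1$ together with the leftover block $p=0$, $m=1,\dots,n-1$, and then carries out a long chain of re-indexings to complete each residue class to a form where the one-sided identity can be re-applied; the six families of terms in the statement are precisely the boundary corrections generated by that bookkeeping. You instead use the symmetric two-sided form $\sum_{k\in\mathbb{Z}}$, where the partition $k=np+m$, $p\in\mathbb{Z}$, $0\le m\le n-1$, immediately yields the clean identity $(\ast)$: $\tan^{-1}(\tanh y\cot x)=\sum_{m=0}^{n-1}\tan^{-1}\bigl(\tanh(y/n)\cot((x+m\pi)/n)\bigr)$, and you then verify that the theorem's right-hand side reduces to $(\ast)$ because the helicoid terms cancel in pairs: $\frac{y/n}{(x+m\pi)/n}=\frac{y}{x+m\pi}$ kills the fifth sum against the subtracted terms in the first, and $\frac{y/n}{(x+m\pi)/n-\pi}=\frac{y}{x-(n-m)\pi}$ with the bijection $m\mapsto n-m$ of $\{1,\dots,n-1\}$ kills the fourth sum against the sixth. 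Both cancellations are exact identities of real numbers (no branch issues arise, since the arguments of $\tan^{-1}$ are literally equal), so your argument is complete and agrees with the paper's conclusion. What your route buys is twofold: it collapses the paper's page of re-indexing into a few lines, and it makes visible a fact the paper does not remark on, namely that the translated-helicoid terms in the statement sum to zero, so the theorem is equivalent to the three-term decomposition $(\ast)$ into scaled and translated Scherk pieces alone. The one step to spell out in a careful write-up is the rearrangement of the conditionally convergent symmetric sum into residue classes: writing $\sum_{k=-N}^{N}=\sum_{m=0}^{n-1}\sum_{|np+m|\le N}$, the discrepancy from the symmetric limits in $p$ consists of boundedly many terms each of size $O(1/N)$, so the interchange is legitimate; the paper is no more explicit about the analogous interchange in its own proof.
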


\begin{proof}
Using an Euler-Ramanujan identity, used in this context  in ~\cite{De}, we have $$ \text{tan}^{-1}(tanh (y) cot (x) ) =   tan^{-1} \left(\frac{y}{x} \right) + \sum_{k=1}^{\infty} tan^{-1} (\frac{y}{x + k \pi}) + \sum_{k=1}^{\infty} tan^{-1} (\frac{y}{x - k \pi}) $$

Fix any $n \in {\mathbb N}$, a natural number. Then putting $k = np + m$, $p = 1,..., \infty$ and $m =0,..., n-1$ and the additional case, $p=0, m=1,...,n-1$, we get 

\begin{equation}\label{heli}
    \begin{split}
		 \text{tan}^{-1}(tanh (y) cot (x) )  = &   \sum_{p=1}^\infty \sum_{m=0}^{n-1} \tan^{-1} \left( \frac{y}{x + (np +m) \pi} \right) + \sum_{p=1}^\infty \sum_{m=0}^{n-1} \tan^{-1}\left( \frac{y}{x - (np +m) \pi} \right) \\ & + \sum_{m=1}^{n-1}  tan^{-1} \left( \frac{y}{x + m \pi} \right) +  \sum_{m=1}^{n-1} \tan^{-1} \left( \frac{y}{x  - m \pi} \right) + tan^{-1} \left( \frac{y}{x} \right)
	\end{split}
\end{equation}

Now,  the first two terms are

\begin{align}
		&\sum_{p=1}^\infty \sum_{m=0}^{n-1} \tan^{-1} \left( \frac{y}{x + (np +m) \pi} \right)
		+ \sum_{p=1}^\infty \sum_{m=0}^{n-1} \tan^{-1} \left( \frac{y}{x - (np +m) \pi} \right) \nonumber \\
		 =  & \sum_{p=1}^\infty \sum_{m=0}^{n-1} tan^{-1} \left( \frac{y/n}{(x+m\pi)/n + p \pi} \right) +  \sum_{p=1}^\infty \sum_{m=0}^{n-1} tan^{-1} \left( \frac{y/n}{(x/n - (p \pi + m \pi/n)} \right)\nonumber \\
		&  \text{Putting \;  $m^{\prime} = n-1 -m $ \; in \; the \;  last \; sum \; we \; get}\nonumber \\
		 = & \sum_{p=1}^\infty \sum_{m=0}^{n-1} tan^{-1} \left( \frac{y/n}{(x+m \pi)/n + p \pi} \right) +  \sum_{p=1}^\infty \sum_{m^{\prime}=0}^{n-1} tan^{-1} \left( \frac{y/n}{(x/n - (p \pi + (n-1 -m^{\prime}) \pi/n )} \right) \nonumber \\
		= & \sum_{p=1}^\infty \sum_{m=0}^{n-1} tan^{-1} \left( \frac{y/n}{(x+m \pi)/n + p \pi} \right) +  \sum_{p=1}^\infty \sum_{m=0}^{n-1} tan^{-1} \left( \frac{y/n}{(x+(m +1)\pi) /n - (p+1)\pi } \right) \nonumber \\
		= & \sum_{p=1}^\infty \sum_{m=0}^{n-1} tan^{-1} \left( \frac{y/n}{(x+m\pi)/n + p \pi} \right) +  \sum_{p=2}^\infty \sum_{m=1}^{n} tan^{-1} \left( \frac{y/n}{(x+m\pi) /n - p \pi } \right) \nonumber \\
		= & \sum_{p=1}^\infty \sum_{m=1}^{n-1} tan^{-1} \left( \frac{y/n}{(x+m\pi)/n + p \pi} \right) +  \sum_{p=1}^\infty \sum_{m=1}^{n-1} tan^{-1} \left( \frac{y/n}{(x+m\pi) /n - p \pi } \right) \nonumber \\
		& - \sum_{m=1}^{n-1} tan^{-1} \left( \frac{y/n}{(x+ m \pi)/n - \pi} \right)  +  \sum_{p=1}^{\infty} tan^{-1} \left( \frac{y/n}{(x  /n + p \pi)  } \right)  \nonumber \\ & + \sum_{p = 2}^ {\infty} tan^{-1} \left( \frac{y/n}{(x + n \pi)/n -p \pi} \right) \nonumber \\
		 =  & \sum_{p=1}^\infty \sum_{m=1}^{n-1} tan^{-1} \left( \frac{y/n}{(x+m\pi)/n + p \pi} \right) +  \sum_{p=1}^\infty \sum_{m=1}^{n-1} tan^{-1} \left( \frac{y/n}{(x+m\pi) /n - p \pi } \right) \nonumber \\ &+  \sum_{p=1}^{\infty} tan^{-1} \left( \frac{y/n}{(x  /n + p \pi)  } \right) + \sum_{p = 2}^ {\infty} tan^{-1} \left( \frac{y/n}{(x/n  -(p-1) \pi)} \right) \nonumber \\ & - \sum_{m=1}^{n-1} tan^{-1} \left( \frac{y/n}{(x+ m \pi)/n - \pi} \right)  \nonumber \\
		 = & \sum_{p=1}^\infty \sum_{m=1}^{n-1} tan^{-1} \left( \frac{y/n}{(x+m\pi)/n + p \pi} \right) +  \sum_{p=1}^\infty \sum_{m=1}^{n-1} tan^{-1} \left( \frac{y/n}{(x+m\pi) /n - p \pi } \right) \nonumber \\ 
		&  +  \sum_{p=1}^{\infty} tan^{-1} \left( \frac{y/n}{(x  /n + p \pi)  } \right)  +  \sum_{p = 1}^ {\infty} tan^{-1} \left( \frac{y/n}{(x/n -p \pi)}  \right)\nonumber \\& - \sum_{m=1}^{n-1} tan^{-1} \left( \frac{y/n}{(x+ m \pi)/n - \pi} \right)\nonumber \\ 
		= & \sum_{m=1}^{n-1}  \left(\sum_{p=1}^\infty tan^{-1} \left(\frac{y/n}{(x+m\pi)/n + p \pi} \right) +  \sum_{p=1}^{\infty}   tan^{-1} \left(\frac{y/n}{(x+m\pi) /n - p \pi } \right) \right) \nonumber \\ & - \sum_{m=1}^{n-1} tan^{-1} \left( \frac{y/n}{(x+ m \pi)/n - \pi} \right)\nonumber \\ 
		&  +  \sum_{p=1}^{\infty}  tan^{-1} \left( \frac{y/n}{(x  /n + p \pi)  } \right)  +  \sum_{p=1}^{\infty}   tan^{-1} \left(\frac{y/n}{(x/n -p \pi)} \right) \nonumber \\
		= & \sum_{m=1}^{n-1} tan^{-1} (tanh (y/n)cot ((x + m \pi)/n)) - tan^{-1} \left(\frac{y/n}{(x + m \pi)/n} \right) \nonumber \\
		&  +  tan^{-1}( tanh (y/n)cot (x/n )) -  tan^{-1}(y/x)  - \sum_{m=1}^{n-1} tan^{-1} \left( \frac{y/n}{(x+ m \pi)/n - \pi} \right) \label{heli1}
\end{align}

where  we have used the  Euler- Ramanujan identity, 
$$ \sum_{p=1}^{\infty} tan^{-1} \frac{a}{(b  + p \pi)  }  +  \sum_{p = 1}^ {\infty} tan^{-1}\frac{a}{(b -p \pi)}  + tan^{-1}(a/b)= tan^{-1}(tanh(a) cot(b)) $$

Thus we we get from  equations (\ref{heli}) and (\ref{heli1}) the following equation: 

\begin{equation}
	\begin{split}
		tan^{-1} tanh (y) cot (x)  & = \sum_{m=1}^{n-1} tan^{-1} (tanh (y/n)cot ((x + m \pi)/n)) - tan^{-1} \left(\frac{y/n}{(x + m \pi)/n} \right) \nonumber \\
		&  +  tan^{-1}( tanh (y/n)cot (x/n ))  - \sum_{m=1}^{n-1} tan^{-1} \left( \frac{y/n}{(x+ m \pi)/n - \pi} \right) \\ & + \sum_{m=1}^{n-1} tan^{-1} \left( \frac{y}{x + m \pi } \right)  +  \sum_{m=1}^{n-1} tan^{-1} \left( \frac{y}{x - m \pi } \right)
		\end{split}
 \end{equation}
 \end{proof}

\section{{\bf Finite Decomposition using Weierstrass-Enneper Representation }}

\subsection{Minimal surfaces:}

Let $\Omega$ be a domain  in ${\mathbb C}$ such the following holds. 

Let $\zeta= \zeta_1 + i \zeta_2$ be in $\Omega$. Let ${\bf X}(\zeta_1, \zeta_2) = \left( x(\zeta_1, \zeta_2), y(\zeta_1, \zeta_2), z(\zeta_1, \zeta_2) \right)$ be the Weierstrass-Enneper representation of a minimal surface given by the data $f,g$ as follows:

\begin{equation}
	\begin{split}
		x(\zeta_1, \zeta_2) &= x_0 + \text{Re} \int_{\zeta_0}^{\zeta} ( 1-g^2)(w) f(w)\\
		y(\zeta_1, \zeta_2) &= y_0 + \text{Re} \int_{\zeta_0}^{\zeta} i( 1+g^2)(w) f(w)\\
		z(\zeta_1, \zeta_2) &= z_0 + \text{Re} \int_{\zeta_0}^{\zeta} 2g(w) f(w)
	\end{split}
\end{equation}

Here $f$ is holomorphic and $g$ is meromorphic such that $fg^2$ is holomorphic.

In the neighbourhood of an non-umbilical point, one can perform a change of coodinates such that  the W-E representation take the following form on a domain which we rename as $\Omega$ as well.

 \begin{equation}\label{W-E-R}
 \begin{split}
x(\zeta_1, \zeta_2) = x_0 + \text{Re} \int_{\zeta_0}^{\zeta} ( 1-w^2) R(w) dw \\
y(\zeta_1, \zeta_2) = y_0 + \text{Re} \int_{\zeta_0}^{\zeta} i( 1+w^2) R(w) dw \\
z(\zeta_1, \zeta_2) = z_0 + \text{Re} \int_{\zeta_0}^{\zeta} 2w  R(w) dw 
\end{split}
\end{equation}
where $R=R(w)$ is non-vanishing and is holomorphic  on $\Omega$.

\begin{theorem}
Suppose a minimal surface can be written as $z =z(x,y)$ for $(x,y)$  coordinates on $U$ an open subset of ${\mathbb C}$ such that $(x,y,z)$ is a non-umbilical point on the minimal surface.  Then there exists a neighbourhood $U^{\prime} \subset U$  and $\Phi_i$ invertible maps such that $z (x,y)= z_1 (x,y) + z_2(x,y) + ...+ z_n(x,y)$ where $z_i = z_i( \Phi_i(x_i, y_i))$ are height functions of  minimal surfaces w.r.t. $(x_i, y_i)$, which  are  coordinates in a neighbourhood $W$ of ${\mathbb C}$. This decomposition is not unique. 
\end{theorem}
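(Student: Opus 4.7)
The plan is to lift the decomposition question to the level of the Weierstrass--Enneper (W--E) data. Near the given non-umbilical point the surface admits the normalized representation (\ref{W-E-R}) with $R$ holomorphic and non-vanishing on a disk $\Omega \subset \mathbb{C}$ containing $\zeta_0$, and the map $\zeta \mapsto (x(\zeta), y(\zeta))$ is a local diffeomorphism onto a neighbourhood $U$ of $(x_0, y_0)$. I would then split
\begin{equation*}
R(w) = R_1(w) + R_2(w) + \cdots + R_n(w)
\end{equation*}
into holomorphic summands; the simplest choice is $R_i = R/n$, but any splitting into non-vanishing holomorphic pieces on $\Omega$ is permissible, which is exactly the source of the asserted non-uniqueness.

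For each $i$, plugging $R_i$ into (\ref{W-E-R}) with base point $\zeta_0$ and constants $(x_0^{(i)}, y_0^{(i)}, z_0^{(i)})$ chosen so that $\sum_i z_0^{(i)} = z_0$ (and similarly for the other two) produces a minimal immersion $\mathbf{X}_i(\zeta) = (x_i(\zeta), y_i(\zeta), z_i(\zeta))$. By linearity of the integral and of the real part,
\begin{equation*}
z(\zeta) = \sum_{i=1}^{n} z_i(\zeta), \qquad \zeta \in \Omega,
\end{equation*}
so it remains only to express each $z_i$ as a function of its own two spatial coordinates and to convert $\zeta$ back to $(x, y)$.

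To do this I would shrink $\Omega$ to a subdomain $\Omega'$ so small that, for every $i$, the projection $\zeta \mapsto (x_i(\zeta), y_i(\zeta))$ is a diffeomorphism onto a domain $W_i \subset \mathbb{R}^2$; then the $i$-th surface is locally the graph $z_i = z_i(x_i, y_i)$ of a minimal height function in the sense required by the statement. Writing $\zeta = \zeta(x, y)$ for the inverse of the original projection and setting $\Phi_i(x, y) = (x_i(\zeta(x, y)), y_i(\zeta(x, y)))$ yields an invertible map from a common image $U' \subset U$ onto $W_i$, and substituting gives the claimed identity
\begin{equation*}
z(x, y) = \sum_{i=1}^{n} z_i\bigl(\Phi_i(x, y)\bigr).
\end{equation*}

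The main obstacle is to guarantee that each projection $\zeta \mapsto (x_i, y_i)$ is non-degenerate at $\zeta_0$, equivalently that the $i$-th surface is a graph over its own horizontal plane there; a failure corresponds to $\zeta_0$ being umbilical for the $i$-th W--E data. For the trivial splitting $R_i = R/n$ this is immediate, because $\mathbf{X}_i = \tfrac{1}{n}\mathbf{X}$ up to translation, so both non-vanishing of $R_i$ and the graph property are inherited from $R$ and $\mathbf{X}$. For genuinely different splittings one verifies non-degeneracy by a direct Jacobian computation at $\zeta_0$ and, since non-degeneracy is an open condition, shrinks $\Omega$ to $\Omega'$ (and $U$ to $U'$) until it holds simultaneously for all $i$.
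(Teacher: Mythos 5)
Your proposal is correct and follows essentially the same route as the paper: normalize the Weierstrass--Enneper data near the non-umbilical point, split $R$ into non-vanishing holomorphic summands $R_i$ (the source of non-uniqueness), use linearity of the $z$-integral, and invert each projection $\zeta \mapsto (x_i, y_i)$ to get the maps $\Phi_i$. One minor correction: the possible degeneracy of that projection is governed by the Gauss map becoming horizontal ($|g(\zeta_0)|=1$), not by umbilicity, and since every $\mathbf{X}_i$ shares the Gauss map $g(w)=w$ with the original graph, the Jacobian $|R_i|^2(1-|\zeta|^4)$ is automatically non-zero for all $i$ wherever the original surface is a graph, so no extra shrinking argument is needed beyond $R_i \neq 0$.
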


\begin{proof}
Since $(x,y, z(x,y))$ is non-umbilical, there is a non-vanishing meromorphic function  $R$ which is holomorphic in a neighbourhood $U$  such that (~\ref{W-E-R}) holds in a neighbourhood $U$.
Then  there is a parameter $\zeta  \in \Omega \subset {\mathbb C}$, $\Omega$ open such that $(x,y, z) = (x(\zeta, \bar{\zeta} ) , y(\zeta, \bar{\zeta}), z(\zeta, \bar{\zeta}) )$. 

Let us decompose $R = R_1 + R_2 +....+ R_n$ such that $R_i$ are meromorphic and non-vanishing and holomorphic in the domain $U$ for $i=1,...,n$ such that integrals in equation (~\ref{R}) are finite. Let  $(x_i, y_i, z_i)$ are defined as 

\begin{equation}\label{R}
\begin{split}
 x_i(\zeta_1, \zeta_2) = x_0 + \text{Re} \int_{\zeta_0}^{\zeta} ( 1-w^2) R_i(w) dw \\
y_i(\zeta_1, \zeta_2) = y_0 + \text{Re} \int_{\zeta_0}^{\zeta} i( 1+w^2) R_i(w) dw \\
z_i(\zeta_1, \zeta_2) = z_0 + \text{Re} \int_{\zeta_0}^{\zeta} 2w  R_i(w) dw.
\end{split}
\end{equation}
Then there exists $\Phi_i$ for each $i$, the function $z_i$ can be written as  $z_i = z_i(\Phi_i(x_i, y_i))$.
$\Phi_i$ is defined as follows.

 We know $R_i \neq 0$, $i=1,...,n$. Let $ \eta_i  =  x_i - \sqrt{-1} y_i $ given by (~\ref{R}).   One can show that $\frac{\partial}{\partial \zeta}  \eta_i = R_i \neq 0$.  Then define  $\bar{\zeta}  = f_i(\eta_i)$ where  $f_i$ an invertible map, by inverse function theorem. Then $\zeta = \overline{f_i(\eta_i)}$. Then,  it is easy to see that there exists an invertible map $\Phi_i$ such that $(\zeta, \bar{\zeta}) = \Phi_i (x_i, y_i)$ and thus $z_i = z_i(\zeta, \bar{\zeta}) = z_i ( \Phi_i(x_i, y_i))$.  
$z_i$ is a minimal surface w.r.t.  $(x_i, y_i)$ and (~\ref{R}) is its W-E representation.

\end{proof}

\subsection{Maximal surfaces:}
There is an analogue of the Weierstrass representation for maximal surfaces in ${\mathbb L}^3$ in terms of a non-vanishing meromorphic function $M$ which is holomorphic in a neighbourhood where the Gauss map is $1-1$, see for instance ~\cite{DeSi}. Thus one can show analogously the following theorem:

\begin{theorem}
Suppose a maximal  surface in ${\mathbb L}^3$ can be written as $z =z(x,y)$ for $(x,y)$  coordinates on $U$ an open subset of ${\mathbb C}$ such that the Gauss map is $1-1$.  Then there exists a neighbourhood $U^{\prime} \subset U$  and $\Phi_i$ invertible maps such that $z (x,y)= z_1 (x,y) + z_2(x,y) + ...+ z_n(x,y)$ where $z_i = z_i( \Phi_i(x_i, y_i))$ are height functions of  maximal surfaces w.r.t. $(x_i, y_i)$, which  are  coordinates in a neighbourhood $W$ of ${\mathbb C}$. This decomposition is not unique. 
\end{theorem}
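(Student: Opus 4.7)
The plan is to mimic the proof of the preceding theorem for minimal surfaces, substituting the Weierstrass-type representation for maximal surfaces in ${\mathbb L}^3$ for the classical one. At a point where the Gauss map is $1$-$1$, one may (after a change of local coordinates on a neighbourhood $\Omega\subset{\mathbb C}$ of $\zeta_0$) put the surface in the canonical form
\begin{equation*}
\begin{split}
x(\zeta_1,\zeta_2) &= x_0 + \text{Re} \int_{\zeta_0}^{\zeta} (1+w^2)\, M(w)\,dw,\\
y(\zeta_1,\zeta_2) &= y_0 + \text{Re} \int_{\zeta_0}^{\zeta} i(1-w^2)\, M(w)\,dw,\\
z(\zeta_1,\zeta_2) &= z_0 + \text{Re} \int_{\zeta_0}^{\zeta} 2w\, M(w)\,dw,
\end{split}
\end{equation*}
with $M$ non-vanishing and holomorphic on $\Omega$ (the reference \cite{DeSi} already supplies this normal form).

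First I would write $M = M_1 + \cdots + M_n$ on a (possibly smaller) neighbourhood $U'\subset U$ of $\zeta_0$ so that each $M_i$ is non-vanishing and holomorphic on $U'$ and so that the corresponding integrals are finite. The latitude in such a choice (for instance, $M_i = c_i M$ with $\sum c_i = 1$, $c_i\neq 0$, followed by small holomorphic perturbations) already yields the asserted non-uniqueness. Replacing $M$ by $M_i$ in the displayed formulas defines $(x_i,y_i,z_i)$, and distributing base points so that $(x_0,y_0,z_0) = \sum_i (x_0^{(i)}, y_0^{(i)}, z_0^{(i)})$ gives the pointwise linearity $z = z_1+\cdots+z_n$ (with corresponding identities for $x$ and $y$). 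Each triple $(x_i,y_i,z_i)$ is automatically a maximal surface in ${\mathbb L}^3$ with Weierstrass data $M_i$.

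To realise each $z_i$ as a height function over $(x_i, y_i)$, I would apply the inverse function theorem to $\eta_i := x_i - i y_i$: a direct computation yields
$$\frac{\partial \eta_i}{\partial \zeta} \;=\; \tfrac{1}{2}\bigl((1+\zeta^2) + (1-\zeta^2)\bigr) M_i(\zeta) \;=\; M_i(\zeta) \;\neq\; 0,$$
so $\zeta \mapsto \eta_i$ is locally biholomorphic on $U'$. Extracting $\zeta = \overline{f_i(\eta_i)}$ gives an invertible $\Phi_i$ with $(\zeta,\bar\zeta) = \Phi_i(x_i,y_i)$ on a neighbourhood $W\subset{\mathbb C}$, whence $z_i = z_i(\Phi_i(x_i,y_i))$, exactly as in the minimal case.

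The main (mild) obstacle is to shrink $U'$ enough that all $n$ of the maps $\zeta \mapsto (x_i, y_i)$ are simultaneously invertible on a common domain, and that the finitely many non-vanishing conditions on the $M_i$ hold there together; both are immediate from openness of the non-vanishing locus of a holomorphic function and the inverse function theorem applied to each $\eta_i$. Beyond invoking the correct maximal-surface Weierstrass normalisation, no new analytic input is needed, and the proof runs verbatim parallel to the minimal surface one.
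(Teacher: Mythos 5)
Your proposal is correct and follows essentially the same route as the paper, which simply asserts the maximal case "can be shown analogously" to the minimal one: decompose the non-vanishing holomorphic Weierstrass function $M=M_1+\cdots+M_n$, define $(x_i,y_i,z_i)$ by the same integrals with $M_i$, and invert $\eta_i=x_i-iy_i$ via $\partial\eta_i/\partial\zeta=M_i\neq 0$ to obtain the maps $\Phi_i$. If anything, you supply details the paper leaves implicit (the sign check in $\partial\eta_i/\partial\zeta$ for the maximal normal form and the distribution of base points so the constants add up correctly).
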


\section{{\bf Decompositions of height functions in general}}

Given $z=f(x,y)$ be a surface given in a local graphical form. Then one can always decompose the height function  into finite sum of scaled versions of itself, namely, $z= \frac{1}{C_n} \sum_{m=1}^n c_m f(x,y) = \frac{1}{C_n} \sum_{m=1}^n  f_m(x,y)$ where $C_n = \sum_{m=1}^n \frac{1}{c_m}$ and $f_m = c_m f$. We will use this fact. 

{\bf Minimal surfaces:}

Let $\Omega$ be a domain  in ${\mathbb C}$. 

Let $\zeta= \zeta_1 + i \zeta_2$ be in $\Omega$. Let ${\bf X}(\zeta_1, \zeta_2) = \left( x(\zeta_1, \zeta_2), y(\zeta_1, \zeta_2), z(\zeta_1, \zeta_2) \right)$ be the Weierstrass-Enneper representation of a minimal surface given by the data $f,g$ as follows:

\begin{equation}
	\begin{split}
		x(\zeta_1, \zeta_2) &= x_0 + \text{Re} \int_{\zeta_0}^{\zeta} ( 1-g^2)(w) f(w) \\
		y(\zeta_1, \zeta_2) &= y_0 + \text{Re} \int_{\zeta_0}^{\zeta} i( 1+g^2)(w) f(w) \\
		z(\zeta_1, \zeta_2) &= z_0 + \text{Re} \int_{\zeta_0}^{\zeta} 2g(w) f(w)
	\end{split}
\end{equation}

Here $f$ is holomorphic and $g$ is meromorphic such that $fg^2$ is holomorphic.

Suppose in this domain $\Omega$,  there exists an invertible map $\Phi$ such that $(\zeta_1, \zeta_2) = \Phi (x,y)$.

Then $z=z \circ \Phi (x,y)$ is height function  of the minimal surface, i.e it satisfies the equation

\begin{equation}\label{minsurf}
(1+ z_x^2) z_{yy}- 2 z_x z_y z_{xy} + (1 + z_y^2) z_{xx} =0
\end{equation}

We name it $z = Z(x,y)$ where $Z(x,y) = z \circ \Phi (x,y)$.

\begin{proposition}\label{decompmin}
Let $\Omega \subset {\mathbb C}$ be a domain and $f,g$ be a Weierstrass-Enneper  data on $\Omega$ for a minimal surface in $ {\mathbb R}^3$ and $\zeta \in \Omega$ be the parameter as above such that $(\zeta_1, \zeta_2) = \Phi (x,y)$ with  $\Phi$ invertible.   Then in  the domain $\Omega$ there exists an invertible map $\Phi_m$  such that $(\zeta_1, \zeta_2) = \Phi_{m}  (a_m x+b_m, a_m y +d_m)$, where  $a_m \neq 0$.   Let $c_m$ be a sequence  of nonzero real numbers.  Then for $n \in \{2,3,...\}$  $z=Z(x,y) = \frac{1}{C_n} \sum_{m=1}^{n } Z_{m} (a_m x+b_m, a_m y+d_m)$ where $Z_m(x,y) = \frac{1}{c_m} Z(\frac{x-b_m}{a_m}, \frac{y-d_m}{a_m})$ and $z=\alpha_m  Z_{m}(x,y)$ is a minimal surface  which is a scaled and translated  version of $z=Z(x,y)$ with $\alpha_m = \frac{c_m}{  a_m }$  and $ C_n = \sum_{m=1}^{n}( \frac{1}{c_m}) $ is a constant. 
\end{proposition}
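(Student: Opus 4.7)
The approach is essentially an unwinding of definitions anchored by one nontrivial ingredient: the scaling invariance of the minimal surface equation. My plan is to begin with the tautological decomposition $Z(x,y) = \tfrac{1}{C_n} \sum_{m=1}^{n} \tfrac{1}{c_m} Z(x,y)$ (which holds by the very choice $C_n = \sum \tfrac{1}{c_m}$), and then rewrite each summand via the defined $Z_m$.

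First I would construct the maps $\Phi_m$ explicitly. Since $(\zeta_1,\zeta_2) = \Phi(x,y)$ with $\Phi$ invertible, and since the affine map $T_m(x,y) = (a_m x + b_m, a_m y + d_m)$ is invertible for $a_m \neq 0$, I set $\Phi_m = \Phi \circ T_m^{-1}$, so that $(\zeta_1,\zeta_2) = \Phi_m(a_m x + b_m, a_m y + d_m)$ by construction. Next I verify the algebraic identity
\begin{equation*}
Z_m(a_m x + b_m,\, a_m y + d_m) \;=\; \tfrac{1}{c_m}\, Z(x,y)
\end{equation*}
by direct substitution into the defining formula $Z_m(u,v) = \tfrac{1}{c_m} Z\bigl((u-b_m)/a_m,\,(v-d_m)/a_m\bigr)$. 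Summing over $m$ and dividing by $C_n$ then yields the claimed formula $Z(x,y) = \tfrac{1}{C_n} \sum_{m=1}^{n} Z_m(a_m x + b_m,\, a_m y + d_m)$.

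The only substantive step is verifying that $z = \alpha_m Z_m(x,y)$ is itself a minimal surface. Unraveling gives $\alpha_m Z_m(x,y) = \tfrac{1}{a_m}\, Z\bigl((x-b_m)/a_m,\,(y-d_m)/a_m\bigr)$, so it suffices to show that the minimal surface PDE \eqref{minsurf} is preserved under the one-parameter family of homothety-translations $W(x,y) := \lambda\, Z\bigl((x-b)/\lambda,\,(y-d)/\lambda\bigr)$ for any $\lambda \neq 0$ and any real $b,d$. A straightforward chain-rule computation shows that $W_x, W_y$ coincide with $Z_x, Z_y$ evaluated at $((x-b)/\lambda,(y-d)/\lambda)$ (the $\lambda$ factors cancel), while $W_{xx}, W_{xy}, W_{yy}$ pick up a common factor of $1/\lambda$; hence the left-hand side of \eqref{minsurf} evaluated on $W$ equals $1/\lambda$ times the same expression for $Z$, and therefore vanishes. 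Setting $\lambda = 1/a_m$, $b = b_m$, $d = d_m$ identifies $W$ with $\alpha_m Z_m$ where $\alpha_m = c_m/a_m$, as claimed.

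Honestly, there is no serious obstacle here: the statement is essentially a tautology packaged in W--E language, with the only real input being the well-known homothety/translation invariance of the minimal surface equation. The main care needed is bookkeeping, namely keeping straight the three different scalars $a_m$, $c_m$, and $\alpha_m = c_m/a_m$ and checking that they line up so that $C_n \cdot Z = \sum_m Z_m \circ T_m$ on the nose. The existence of $\Phi_m$ in the prescribed form costs nothing beyond invertibility of $\Phi$ and of the affine map $T_m$.
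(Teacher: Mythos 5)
Your construction of $\Phi_m = \Phi\circ T_m^{-1}$ and the tautological identity $Z_m(a_mx+b_m,\,a_my+d_m)=\frac{1}{c_m}Z(x,y)$, hence $Z=\frac{1}{C_n}\sum_m Z_m(a_mx+b_m,a_my+d_m)$, match the paper's argument exactly (the paper runs the same computation in the Weierstrass--Enneper parameter $\zeta$ rather than in $(x,y)$, but the content is identical). Where you genuinely diverge is in justifying that the summands are minimal: the paper observes that $(x/a_m,y/a_m,z/a_m)$ is again minimal because $(f/a_m,g)$ is again Weierstrass--Enneper data, whereas you verify homothety--translation invariance of the PDE (\ref{minsurf}) directly by the chain rule. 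Your computation that $W(x,y)=\lambda\,Z\bigl((x-b)/\lambda,(y-d)/\lambda\bigr)$ satisfies (\ref{minsurf}) is correct and is a perfectly good, more elementary substitute for the W--E argument.

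The gap is in the final identification. Setting $\lambda=1/a_m$ in your family gives
\[
W(x,y)=\tfrac{1}{a_m}\,Z\bigl(a_m(x-b),\,a_m(y-d)\bigr),
\]
whose inner arguments are scaled by $a_m$, not by $1/a_m$; this is \emph{not} the function $\alpha_m Z_m(x,y)=\tfrac{1}{a_m}Z\bigl((x-b_m)/a_m,(y-d_m)/a_m\bigr)$ unless $a_m^2=1$. In your family the prefactor and the inner scaling are forced to be the same $\lambda$ (that is precisely why the $\lambda$'s cancel in $W_x$), so the member of the family whose inner arguments are $(x-b_m)/a_m$ is $\lambda=a_m$, i.e.\ $a_m Z\bigl((x-b_m)/a_m,(y-d_m)/a_m\bigr)=(c_ma_m)\,Z_m(x,y)$. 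In other words, what your (correct) lemma proves is that $c_ma_m\,Z_m$ is minimal, not that $\frac{c_m}{a_m}Z_m$ is; for general $a_m$ the function $\frac{1}{a_m}Z(x/a_m,y/a_m)$ fails (\ref{minsurf}), since the quadratic terms $z_x^2z_{yy}$, $z_xz_yz_{xy}$, $z_y^2z_{xx}$ acquire a factor $1/a_m^4$ relative to the Laplacian terms and the two groups do not vanish separately. Note that the paper's own W--E step likewise establishes minimality of the homothety $\frac{1}{a_m}{\bf X}$, i.e.\ of $a_mZ(x/a_m,y/a_m)$, which is consistent with $\alpha_m=c_ma_m$ rather than the stated $\alpha_m=c_m/a_m$; you should have flagged this mismatch instead of asserting that the identification goes through.
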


\begin{proof}

  Recall that  solving $z$ in terms of $(x,y)$ from the W-E data, we have $z=z \circ \Phi (x,y)$ is height function  of the minimal surface where $(\zeta_1, \zeta_2 ) = \Phi (x,y)$.  
We name it $z = Z(x,y)$. Existence of $\Phi$ implies  there exists an invertible map $\Phi_m $ such that $(\zeta_1, \zeta_2) = \Phi_m (a_m x + b_m, a_m y + d_m)$, since $a_m \neq 0$. In fact, $\Phi_m(x,y) = \Phi(\frac{x-b_m}{a_m}, \frac{y-d_m}{a_m})$. 

Let $m \leq  n $ be  integers with $m \geq 2$ . Let $\zeta=  \zeta_1 + i \zeta_2$ and $a_m \neq 0$.

\begin{equation*}
	\begin{split}
		\frac{x(\zeta_1, \zeta_2)}{a_m} &= \frac{x_0}{a_m} + \text{Re} \int_{\zeta_0}^{\zeta} ( 1-g^2)(w) \frac{f(w)}{a_m}\\
		\frac{y(\zeta_1, \zeta_2)}{a_m} &= \frac{y_0}{a_m} + \text{Re} \int_{\zeta_0}^{\zeta} i( 1+g^2)(w) \frac{f(w)}{a_m}\\
		\frac{z(\zeta_1, \zeta_2)}{a_m} &= \frac{z_0}{a_m} + \text{Re} \int_{\zeta_0}^{\zeta} 2g(w) \frac{f(w)}{a_m}
	\end{split}
\end{equation*}

Here $\frac{f}{a_m}$ is holomorphic and $g$ is meromorphic such that $\frac{f}{a_m}g^2$ is holomorphic.
Thus ${\bf X_m} = (\frac{x}{a_m}, \frac{y}{a_m}, \frac{z}{a_m})$ is a minimal surface with the W-E data $(\frac{f}{a_m}, g)$ and the same W-E parameter $ \zeta= \zeta_1 + i \zeta_2 $  as that of ${\bf X}$.   

  Let $z_m (\zeta_1, \zeta_2) = \frac{1}{c_m}z( \zeta_1, \zeta_2 )$.

 Then, $z_m(\zeta_1, \zeta_2) = z_m  \circ \Phi_m (a_m x + b_m, a_m y + d_m)  =     Z_m ( a_m x + b_m, a_m y + d_m) $   where $Z_m = z_m \circ \Phi_m$.

Then $\sum_{m=1}^{n } z_m (\zeta_1, \zeta_2)  =
  \sum_{m=1}^{n}  \frac{1}{c_m} z(\zeta_1 , \zeta_2 ) = z(\zeta_1, \zeta_2) (\sum_{m=1}^{n} \frac{1}{c_m} ) =  C_n z(\zeta_1, \zeta_2 ) $ where $C_n = \sum_{m=1}^{n} \frac{1}{c_m} $.
  
  Thus we have 
  \begin{equation}\label{minsum}
  z(\zeta_1, \zeta_2) = \frac{1}{C_n} \sum_{ m= 1}^{n} z_m(\zeta_1, \zeta_2 )
  \end{equation}

  Then $z \circ \Phi (x,y) = \frac{1}{C_n} \sum_{m=1}^{n} z_m \circ \Phi_m ( a_m x + b_m, a_m y + d_m)$.
  
  Thus we have $ Z(x,y) = \frac{1}{C_n} \sum_{m=1}^{n} Z_m( a_m x + b_m, a_m y + d_m )$   where $Z_m(x,y) = \frac{1}{c_m} Z(\frac{x-b_m}{a_m}, \frac{y-d_m}{a_m})$ and $z=\alpha_m  Z_{m}(x,y)$ is a minimal surface  which is a scaled and translated  version of $z=Z(x,y)$ with $\alpha_m = \frac{c_m}{  a_m }$  and $ C_n = \sum_{m=1}^{n}( \frac{1}{c_m}) $ is a constant.

\end{proof}

\subsection{{\bf Decomposition of a family}}

Let $z=Z(x,y)$ be the local  height function of a minimal (maximal) surface. Let us consider it in the Weierstrass-Enneper representation with parameter $\zeta$. In other words, ${\bf X}=(x,y,z)$ is given by  $x=x(\zeta, \bar{\zeta})$, $y = y(\zeta, \bar{\zeta})$ and $ z=z(\zeta, \bar{\zeta})$ for some harmonic functions of $(\zeta_1, \zeta_2)$ where $x(\zeta, \bar{\zeta}), y(\zeta, \bar{\zeta}), z(\zeta, \bar{\zeta})$  are expressed in the isothermal coordinates $\zeta_1, \zeta_2$, with $\zeta = \zeta_1 + i \zeta_2$. Let ${\bf X^c}(\zeta, \bar{\zeta})  = (x^c(\zeta, \bar{\zeta}), y^c(\zeta, \bar{\zeta}), z^c(\zeta, \bar{\zeta}))$ where 
$x,y, z$ are harmonic conjugates of $x^c, y^c, z^c$, w.r.t $(\zeta_1, \zeta_2)$. 

For each $\theta \in [0, \pi/2]$, let ${\bf X}^{\theta} = \cos(\theta) {\bf X} + \sin(\theta) {\bf X^c} $ which is again a minimal (maximal) surface, ~\cite{Do}. 

Let the domain $\Omega$ be such that for each $\theta$ there exists an invertible transformation $\Phi_{\theta}$ such that $(\zeta_1, \zeta_2) = \Phi_{\theta}(x,y)$. Then there exist an invertible $\Phi^{\theta}_m$ such that
$(\zeta_1, \zeta_2) = \Phi^{\theta}_m( a_m x_{\theta} + b_m, a_m y_{\theta} + d_m)$, where $a_m \neq 0$.  

We have a corollary to proposition (~\ref{decompmin}).

\begin{corollary}\label{decompfamily}
Let $\Omega$ be a domain such that for $\zeta = (\zeta_1, \zeta_2) \in \Omega$  there exists an invertible $\Phi_m^{\theta}$ such that $(\zeta_1, \zeta_2) = \Phi_{m}^{\theta} (a_m x_{\theta} + b_m, a_m y_{\theta} + d_m )$, $a_m \neq 0$. 
Let $z=Z_{\theta}(x_{\theta}, y_{\theta}) $ be the 
height function of the family ${\bf X}^{\theta}$. Then $Z_{\theta}(x_{\theta}, y_{\theta})$ admits a decomposition
 $Z_{\theta}(x_{\theta}, y_{\theta}) = \frac{1}{C_n}  \sum_{m=1}^n Z_{\theta m}(a_m x_{\theta} + b_m, a_m y_{\theta} + d_m)$ where   $ z= \alpha_m Z_{{\theta} m}(x,y) $ are also height functions of minimal  surfaces with $\alpha_m = \frac{c_m}{ a_m}$ and $C_n$ is as above.
\end{corollary}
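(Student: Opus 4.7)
The plan is to apply Proposition~\ref{decompmin} pointwise in $\theta$. First I would observe that each member of the associate family ${\bf X}^{\theta} = \cos(\theta){\bf X} + \sin(\theta){\bf X}^c$ is itself a minimal (maximal) surface on the same parameter domain $\Omega$, with Weierstrass-Enneper data $(e^{i\theta} f, g)$ and the same conformal parameter $\zeta = \zeta_1 + i\zeta_2$. This reflects the standard fact that the associate family rotates the holomorphic Weierstrass one-form by a phase and so preserves the zero mean curvature condition.

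Next, by the hypothesis of the corollary, the map $\Phi_{\theta}$ expresses ${\bf X}^{\theta}$ as a graph $z = Z_{\theta}(x_{\theta}, y_{\theta})$ over a suitable open set in the $(x_{\theta}, y_{\theta})$-plane, so ${\bf X}^{\theta}$ meets the structural hypotheses of Proposition~\ref{decompmin} with its own coordinates. In particular, the invertible maps $\Phi_m^{\theta}$ satisfying $(\zeta_1, \zeta_2) = \Phi_m^{\theta}(a_m x_{\theta} + b_m, a_m y_{\theta} + d_m)$ are precisely the ones needed to run the proof of that proposition. I would then mirror its construction: given the sequence $\{c_m\}$ of nonzero reals, define $Z_{\theta m}(x, y) = \frac{1}{c_m} Z_{\theta}\!\left( \frac{x - b_m}{a_m}, \frac{y - d_m}{a_m}\right)$, and check that $z = \alpha_m Z_{\theta m}(x,y)$ is a scaled and translated copy of the minimal (maximal) surface $z = Z_{\theta}(x,y)$ with $\alpha_m = c_m / a_m$, using that $({\bf X}^{\theta}/a_m)$ has Weierstrass-Enneper data $(e^{i\theta} f / a_m, g)$ and is therefore again a minimal (maximal) surface.

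The decomposition itself then follows from the algebraic identity $z^{\theta} = \frac{1}{C_n} \sum_{m=1}^n \frac{1}{c_m}(c_m z^{\theta})$ evaluated at $(\zeta_1, \zeta_2)$, where $C_n = \sum_{m=1}^{n} 1/c_m$. Writing $z_m^{\theta} = \frac{1}{c_m} z^{\theta}$, pulling back the left-hand side via $\Phi_{\theta}$ and the $m$-th summand via $\Phi_m^{\theta}$ yields
$$Z_{\theta}(x_{\theta}, y_{\theta}) = \frac{1}{C_n}\sum_{m=1}^{n} Z_{\theta m}(a_m x_{\theta} + b_m, a_m y_{\theta} + d_m),$$
as required. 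The main obstacle is purely notational: one must track that $\Phi_{\theta}$ and $\Phi_m^{\theta}$ depend on $\theta$ while $a_m, b_m, c_m, d_m$ do not, and verify that the open sets on which all the changes of coordinates are valid share a common domain on which the identity makes sense. Beyond this bookkeeping the corollary introduces no new content beyond Proposition~\ref{decompmin}; it is simply that proposition applied uniformly to every surface of the associate family.
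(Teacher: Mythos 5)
Your proposal is correct and follows essentially the same route as the paper, which proves the corollary simply by invoking Proposition~\ref{decompmin} for each fixed $\theta$; your additional observation that ${\bf X}^{\theta}$ has Weierstrass--Enneper data $(e^{i\theta}f, g)$ on the same parameter domain is exactly the implicit justification for why that proposition applies to every member of the associate family. The extra bookkeeping you supply (tracking the $\theta$-dependence of $\Phi_{\theta}$ and $\Phi_m^{\theta}$) is a faithful expansion of the paper's one-line proof rather than a different argument.
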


\begin{proof}
One can apply proposition (~\ref{decompmin}).
\end{proof}

\subsection{{\bf Decomposition of Height functions of Maximal surfaces:}}

Let us consider ${\mathbb L}^3$ be ${\mathbb R}^3$ with the Lorentz Minkowski metric $ds^2 = dx^2 + dy^2 -d z^2$.  
Let $\Omega$ be a domain in ${\mathbb C}$. Let $\zeta= \zeta_1 + i \zeta_2$ be in $\Omega$. Let ${\bf X}(\zeta_1, \zeta_2) = (x(\zeta_1, \zeta_2), y(\zeta_1, \zeta_2), z(\zeta_1, \zeta_2))$ be the Weierstrass-Enneper representation of a maximal surface given by the data $f,g$ as follows:

\begin{equation}
	\begin{split}
		x(\zeta_1, \zeta_2) &= x_0 + \text{Re} \int_{\zeta_0}^{\zeta} ( 1+g^2)(w) f(w)\\
		y(\zeta_1, \zeta_2) &= y_0 + \text{Re} \int_{\zeta_0}^{\zeta} i( 1-g^2)(w) f(w)\\
		z(\zeta_1, \zeta_2) &= z_0 + \text{Re} \int_{\zeta_0}^{\zeta} -2g(w) f(w)
	\end{split}
\end{equation}

Here $f$ is holomorphic and $g$ is meromorphic such that $fg^2$ is holomorphic and $\zeta = \zeta_1 + i \zeta_2$.

Suppose in this neighbourhood, $(\zeta_1, \zeta_2) $ and $(x, y)$ can be transformed into the other, i.e.  there exists an invertible map $\Phi$ such that $(\zeta_1, \zeta_2) = \Phi (x,y)$,

Then $z=z \circ \Phi (x,y)$ is height function  of the maximal surface.  
We name it $z = Z(x,y)$ where $Z(x,y) = z \circ \Phi(x,y)$. It satisfies the equation:

\begin{equation}\label{maxsurf}
(1 -z_x^2) z_{yy} + 2 z_x z_y z_{xy} + (1 - z_y^2) z_{xx} =0.
\end{equation}

\begin{proposition}\label{decompmax}

Let $\Omega \subset {\mathbb C}$ be a domain and $f,g$ be a Weierstrass-Enneper  data on it for a maximal  surface in $ {\mathbb L}^3$ and $\zeta \in \Omega$ be the parameter as above, i.e.$(\zeta_1, \zeta_2) = \Phi (x,y)$ with  $\Phi$ invertible.   Then in the  domain $\Omega$ there is an invertible $\Phi_m$ such that $(\zeta_1, \zeta_2) = \Phi_{m}  (a_m x+b_m, a_m y +d_m)$, when $a_m \neq 0$.   Let $c_m$ be a sequence of non-zero real  numbers.  For  $n \in \{2,3,...\}$,  $z=Z(x,y) = \frac{1}{C_n} \left(\sum_{m=1}^{n } Z_{m} (a_m x+b_m, a_m y+d_m)\right)$ where $z=Z_m(x,y) = \frac{1}{c_m} Z(\frac{x-b_m}{a_m}, \frac{y-d_m}{a_m})$ and $\alpha_m  Z_{m}(x,y)$ is a maximal surface  which is a scaled and translated version of $z=Z(x,y)$ with $\alpha_m = \frac{c_m}{  a_m} $  and $ C_n = \sum_{m=1}^{n}( \frac{1}{c_m}) $ is a constant. 
\end{proposition}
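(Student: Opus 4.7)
The plan is to mirror the proof of Proposition \ref{decompmin} essentially verbatim, exploiting the fact that the Weierstrass--Enneper representation for maximal surfaces in $\mathbb{L}^3$ has the same linearity in the holomorphic datum $f$ as in the minimal case. The formulas differ only in the signs of $g^2$ and in the $z$-component, but these differences are irrelevant to the scaling argument.

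First I would define the auxiliary invertible map $\Phi_m(x,y) = \Phi\!\left(\frac{x-b_m}{a_m}, \frac{y-d_m}{a_m}\right)$, which is well-defined and invertible because $a_m \neq 0$ and $\Phi$ is invertible on $\Omega$. By construction this satisfies $(\zeta_1,\zeta_2) = \Phi_m(a_m x + b_m, a_m y + d_m)$.

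Next I would observe that if $(f,g)$ is W-E data for the maximal surface $\mathbf{X}$, then $(f/a_m, g)$ is also valid W-E data, because $f/a_m$ is holomorphic and $(f/a_m) g^2 = (fg^2)/a_m$ is holomorphic. The resulting maximal surface is precisely $\mathbf{X}/a_m = (x/a_m, y/a_m, z/a_m)$, so the $z$-component scaled by $1/a_m$ continues to be a maximal surface in $\mathbb{L}^3$ with respect to the scaled $(x,y)$. From this I conclude that $z = \alpha_m Z_m(x,y)$, with $\alpha_m = c_m/a_m$ and $Z_m(x,y) = \tfrac{1}{c_m}Z(\tfrac{x-b_m}{a_m}, \tfrac{y-d_m}{a_m})$, is a scaled and translated maximal surface.

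Finally I would set $z_m(\zeta_1,\zeta_2) = \tfrac{1}{c_m} z(\zeta_1,\zeta_2)$, and sum to obtain $\sum_{m=1}^n z_m(\zeta_1,\zeta_2) = C_n\, z(\zeta_1,\zeta_2)$, where $C_n = \sum_{m=1}^n \tfrac{1}{c_m}$. Dividing by $C_n$ and then composing with $\Phi$ on the left and $\Phi_m$ on the right (using $\zeta = \Phi(x,y) = \Phi_m(a_m x + b_m, a_m y + d_m)$) yields
\[
Z(x,y) = \frac{1}{C_n} \sum_{m=1}^n Z_m(a_m x + b_m,\, a_m y + d_m),
\]
as desired. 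There is no real obstacle here beyond bookkeeping; the only point deserving a moment of care is verifying that scaling $(f,g) \mapsto (f/a_m, g)$ preserves the maximal W-E conditions, which is immediate from the form of the representation in $\mathbb{L}^3$. Since Proposition \ref{decompmin} is already established and the argument is structurally identical, the proof reduces to applying that template with the maximal W-E data.
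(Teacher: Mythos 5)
Your proposal is correct and follows essentially the same route as the paper, which simply states that the proof is identical to the minimal surface case (Proposition \ref{decompmin}); you have merely written out explicitly the verification that the scaling $(f,g)\mapsto(f/a_m,g)$ preserves the maximal Weierstrass--Enneper conditions, which the paper leaves implicit. No gaps.
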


\begin{proof}
The proof is the same as in the minimal surface case.
\end{proof}

\begin{remark}
Decomposition of a family of maximal surfaces can also be exhibited along the same lines as  corollary (~\ref{decompfamily}).
\end{remark}

\subsection{Decomposition of  Timelike Minimal surfaces}

A timelike minimal surface(TLMS)  has a 
 W-E-type representation ~\cite{Le}, ~\cite{Ki}, ~\cite{Ma}:

Let $(u, v) \in \Omega \subset {\mathbb R}^2$ 

${\bf X}= {\bf X}(u,v) =(x(u,v), y(u,v), z(u,v)) $ is a TLMS if

\begin{equation}
	\begin{split}
		x(u,v) &= \frac{1}{2} \int (1+q^2) f(u) du - (1-r^2) g(v) dv\\
		y(u,v) &= - \frac{1}{2} \int (1-q^2) f(u) du + (1-p^2) g(v) dv \\
		z(u,v) &= - \int qf(u) du + r g(v) dv
	\end{split}
\end{equation}

where $(q,r)$ is the projected Gauss map. $f(u)$ and $g(v)$ are arbitrary smooth functions.

Suppose in this domain, there is an invertible  map $\Phi$ such that $\Phi(x,y) = (u,v)$. Then one can write the TLMS as  $z=Z(x,y)$.

\begin{proposition}
Let $\Omega$ be a domain in ${\mathbb R}^2$ and $f=f(u)$ and $g= g(v)$ be the W-E-type data for 
a TLMS    where $(u,v) \in \Omega$.  Suppose there is an invertible transformation $\Phi$ such that $(u,v) = \Phi(x,y)$. Then there exists an invertible $\Phi_m$ such that $(u,v) = \Phi_m(a_m x + b_m, a_m y + d_m)$, where $a_m \neq 0$.  Let us write the TLMS as  $z=Z(x,y)$ locally. Let $c_m$ be a sequence of non-zero real numbers.
Then for every $ n \in \{1,2,3...\}$, $Z(x,y) = \frac{1}{C_n} \sum_{m=1}^{n} Z_{m} (a_m x+ b_m, a_m y + d_m)$ such that $Z_m(x,y) = \frac{1}{c_m} Z(\frac{x-b_m}{a_m}, \frac{y-d_m}{a_m})$. In fact $z = \alpha_m Z_{m}( x, y)  $ is a TLMS which is a scaled and translated  version of  
$z=Z(x,y)$ where $\alpha_m = \frac{c_m}{ a_m} $ and $C_n = \sum_{m=1}^{n} \frac{1}{c_m} $ is a constant.  
\end{proposition}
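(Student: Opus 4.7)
The plan is to mirror, step by step, the proofs of Propositions \ref{decompmin} and \ref{decompmax}, substituting the real W-E-type data $(f(u), g(v))$ for the complex Weierstrass-Enneper data and the parameter domain $\Omega \subset \mathbb{R}^2$ for the complex domain. The crucial feature of the TLMS representation I would exploit is its linearity in $(f, g)$: the expressions for $x, y, z$ are integrals linear in $f(u)$ and $g(v)$, while the projected Gauss map $(q, r)$ is independent of the scaling of $(f, g)$. Hence rescaling $(f, g) \mapsto (f/a_m, g/a_m)$ yields the uniformly scaled surface $\mathbf{X}/a_m$, which is again a TLMS with the same $(q,r)$.

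Fix $n \geq 2$, nonzero reals $c_1, \ldots, c_n$, and constants $a_m \neq 0$, $b_m$, $d_m$. First I would define $z_m(u, v) = z(u, v)/c_m$ at the parameter level, obtaining at once the tautological identity
\begin{equation*}
z(u, v) \;=\; \frac{1}{C_n} \sum_{m=1}^n z_m(u, v), \qquad C_n = \sum_{m=1}^n \frac{1}{c_m}.
\end{equation*}
Next I would transport this identity to graph coordinates. Since $\Phi\colon (x, y) \mapsto (u, v)$ is invertible and $a_m \neq 0$, the map $\Phi_m(x, y) := \Phi((x - b_m)/a_m, (y - d_m)/a_m)$ is invertible and satisfies $(u, v) = \Phi_m(a_m x + b_m, a_m y + d_m)$. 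Setting $Z := z \circ \Phi$ and $Z_m := z_m \circ \Phi_m$, the preceding identity yields
\begin{equation*}
Z(x, y) \;=\; \frac{1}{C_n} \sum_{m=1}^n Z_m(a_m x + b_m, a_m y + d_m),
\end{equation*}
and unwinding the definitions gives $Z_m(x, y) = (1/c_m)\, Z((x - b_m)/a_m, (y - d_m)/a_m)$, exactly as asserted.

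Finally, to recognize $z = \alpha_m Z_m(x, y)$ with $\alpha_m = c_m/a_m$ as the height function of a (translated, rescaled) TLMS, I would reinterpret it as arising from the scaled surface $\mathbf{X}/a_m$ (produced by the rescaled W-E-type data $(f/a_m, g/a_m)$ and established to be a TLMS in the first step) composed with the translation by $(b_m, d_m, 0)$; translation-invariance of the TLMS class then closes the argument. The main point that needs care is simply the bookkeeping linking the rescaling of $(f, g)$, the rescaling of the graph, and the factor $\alpha_m$, which is a chain-rule computation in the spirit of Proposition \ref{decompmin}. The one real simplification over the minimal/maximal cases is that $(u, v)$ is real, so no meromorphy/holomorphy checks are required for the rescaled data---the pair $(f/a_m, g/a_m)$ remains a legitimate pair of smooth one-variable functions automatically.
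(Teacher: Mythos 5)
Your proposal follows essentially the same route as the paper's own proof: rescale the W-E-type data to $(f/a_m, g/a_m)$ to realize $\mathbf{X}/a_m$ as a TLMS, set $z_m = z/c_m$ at the parameter level, define $\Phi_m(x,y) = \Phi\bigl(\frac{x-b_m}{a_m}, \frac{y-d_m}{a_m}\bigr)$, and transport the tautological sum $z = \frac{1}{C_n}\sum z_m$ to graph coordinates via $Z_m = z_m \circ \Phi_m$. The bookkeeping identifying $\alpha_m Z_m$ as a scaled and translated TLMS matches the paper's argument, so the proposal is correct and not a genuinely different proof.
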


\begin{proof}

Let $m,n$ be two integers with $m \leq n , m \geq 2$, $a_m \neq 0$. Then

\begin{equation*}
	\begin{split}
	\frac{x}{a_m}(u,v)  &= \frac{1}{2}   \int (1+q^2) \frac{f(u)}{a_m} du - (1-r^2) \frac{g(v)}{a_m} dv    \\
	\frac{y}{a_m}(u,v)  &=  \frac{1}{2}   \int (1+q^2) \frac{f(u)}{a_m}  du - (1-r^2) \frac{g(v)}{a_m} dv     \\
	\frac{z}{a_m}(u,v) &=     - \int q \frac{f(u)}{a_m} du + r \frac{g(v)}{a_m} dv. 
	\end{split}
\end{equation*}

Recall that  solving $z$ in terms of $(x,y)$ from the W-E data, we have $z=z \circ \Phi (x,y)$ is height function  of the minimal surface where $(u,v) = \Phi (x,y)$.  
We name it $z = Z(x,y)$.

Let $z_m(u,v) = \frac{1}{c_m} z(u,v)$.

Existence of $\Phi$ implies there exists an invertible map $\Phi_m $ such that $(u,v) = \Phi_m (a_m x + b_m, a_m y + d_m)$, $a_m \neq 0$. Take $\Phi_m(x,y) = \Phi(\frac{x-b_m}{a_m}, \frac{y-d_m}{a_m})$. 

 Then, $ z_m(u,v) = z_m  \circ \Phi_m (a_m x + b_m, a_m y + d_m)  =     Z_m ( a_m x + b_m, a_m y + d_m) $   where $Z_m = z_m \circ \Phi_m$. Note that $z=\alpha_m Z_m(x,y)$ is a timelike  minimal surface where $\alpha_m = \frac{c_m} {a_m}$.
It is a scaled and translated version of $z=Z(x,y)$.

Then $\sum_{m=1}^{n } z_m (u,v)  =
  \sum_{m=1}^{n }  \frac{1}{c_m}z(u,v ) = z(u,v) (\sum_{m=1}^{n} \frac{1}{c_m} ) =  C_n z(u,v ) $ where $C_n = \sum_{m=1}^{n } \frac{1}{c_m} $.
  
  Thus we have 
  \begin{equation*}\label{minsum}
  z(u,v) = \frac{1}{C_n} \sum_{ m= 1}^{n} z_m(u,v )
  \end{equation*}

  Then $z \circ \Phi (x,y) = \frac{1}{C_n} \sum_{m=1}^{n} z_m \circ \Phi_m ( a_m x + b_m, a_m y + d_m)$.
  
  Let $Z_m = z_m \circ \Phi_m$.
  Thus we have $ Z(x,y) = \frac{1}{C_n} \sum_{m=1}^{n} Z_m( a_m x + b_m, a_m y + c_m )$ with  $Z_m(x,y) = \frac{1}{c_m} Z(\frac{x-b_m}{a_m}, \frac{y-d_m}{a_m})$. Let  $\alpha_m =  \frac{c_m}{a_m}  $. Then   $ z= \alpha_{m}  Z_{m } ( x,y)$ is a timelike minimal surface.  Thus we have the result.

\end{proof}

\subsection{ Decomposition of  Born-Infeld solitons}

1) Barbishov-Charnikov representation

Let us consider the Born-Infeld soliton equation when it is a local graph of the form $z=z(x,y)$, namely 
\begin{equation}\label{(BI)}
(1-z_y^2) z_{xx} + 2 z_x z_y z_{xy} -(1 + z_x^2) z_{yy}=0
\end{equation}

According to Barbishov and Charnikov, the general solution is given by ~\cite{Wi}, page 617:

\begin{equation}
	\begin{split}
		&x = \frac{1}{2} \left( F(r) + G(s)  - \int s^2 G^{\prime}(s) ds -  \int r^2 F^{\prime}(r) dr \right)\\
		&y  =  \frac{1}{2} \left(G(s) -F(r)  - \int r^2 F^{\prime}(r) dr + \int s^2 G^{\prime}(s) ds \right)\\
		&z= \int r F^{\prime}(r) dr + \int s G^{\prime}(s) ds.
	\end{split}
\end{equation}

where $(F, G)$ are two arbitrary smooth functions of  $r$ and $s$ respectively. 

In the domain $\Omega$ where $(r,s)$ is defined, let us assume there exists an invertible transformation $\Phi$ such that $(r,s) = \Phi(x,y)$. 
Then the soliton can be written as $z=Z(x,y)$.

\begin{proposition}\label{sol}
Let $\Omega$ be a domain in ${\mathbb R}^2$ as above and $F=F(r)$ and $G=G(s)$ be a Barbishov-Charnikov data for 
a Born-Infeld soliton where $(r,s) \in \Omega$.  Suppose there is an invertible transformation $\Phi$ such that $(r,s) = \Phi(x,y)$.  Then $z=Z(x,y)$ is a BI-soliton and  $(r,s) = \Phi_{m}  (a_mx+b_m, a_my+d_m)$, $a_m \neq 0$, where $\Phi_{m} $ is invertible. Let $c_m$ be a sequence of non-zero real numbers.    Then for $n \in \{2,3,...\}$, $Z(x,y) = \frac{1}{C_n} \sum_{m=1}^{n} Z_{m} (a_mx+b_m, a_my+d_m)$ where $Z_m(x,y) = \frac{1}{c_m} Z(\frac{x-b_m}{a_m}, \frac{y-d_m}{a_m})$.  Then $z = \alpha_m  Z_{m}(x,y)$ is a BI soliton     which is a scaled and translated version of $z=Z(x,y)$ with $\alpha_m = \frac{c_m}{ a_m} $  and $ C_n = \sum_{m=1}^{n}( \frac{1}{c_m}) $. 
\end{proposition}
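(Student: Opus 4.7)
The plan is to mirror the arguments used in Proposition~\ref{decompmin} for minimal surfaces and in the preceding TLMS proposition, with the Weierstrass--Enneper data replaced by the Barbishov--Charnikov data $(F,G)$. The pivotal observation is that the BC formulas are linear in $F$ and $G$: dividing both $F$ and $G$ by a non-zero constant $a_m$ scales each of $x(r,s)$, $y(r,s)$, $z(r,s)$ by $1/a_m$, producing the uniformly scaled triple $(x/a_m,\, y/a_m,\, z/a_m)$. This triple is again a BI soliton because equation (\ref{(BI)}) is invariant under the uniform scaling $(x,y,z)\mapsto(\lambda x,\lambda y,\lambda z)$, so that the new parametrization with BC data $(F/a_m, G/a_m)$ lands in the solution set of (\ref{(BI)}).

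First I would construct $\Phi_m$ by setting $\Phi_m(x,y) := \Phi\!\left(\frac{x-b_m}{a_m},\frac{y-d_m}{a_m}\right)$, so that $\Phi_m(a_m x + b_m, a_m y + d_m)=\Phi(x,y)=(r,s)$; the invertibility of $\Phi_m$ follows from that of $\Phi$ together with $a_m\neq 0$. Next, just as in the TLMS proof, I would define $z_m(r,s) := z(r,s)/c_m$ and $Z_m := z_m\circ\Phi_m$, which yields the explicit formula $Z_m(x,y) = \frac{1}{c_m}\,Z\!\left(\frac{x-b_m}{a_m},\frac{y-d_m}{a_m}\right)$, so that $Z_m(a_m x+b_m,\, a_m y+d_m) = \frac{1}{c_m}Z(x,y)$ by construction.

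The decomposition identity is then a trivial summation in the $(r,s)$ variables: $\sum_{m=1}^n z_m(r,s) = z(r,s)\sum_{m=1}^n \frac{1}{c_m} = C_n\,z(r,s)$, and composing with $\Phi$ to convert back to the graph coordinates gives $Z(x,y) = \frac{1}{C_n}\sum_{m=1}^n Z_m(a_m x+b_m,\, a_m y+d_m)$. To complete the proposition I would still need to verify that $z=\alpha_m Z_m(x,y)$ with $\alpha_m = c_m/a_m$ is itself a BI soliton; this I would obtain by combining the uniform-scaling observation (the triple $(x/a_m, y/a_m, z/a_m)$ is a BI soliton via BC data $(F/a_m,G/a_m)$) with the affine reparametrization $x\mapsto a_m x+b_m$, $y\mapsto a_m y+d_m$ of the domain.

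The main obstacle will be keeping track of the two distinct scaling parameters $a_m$ and $c_m$, the former entering through the BC normalization of the domain coordinates and the latter through the multiplicative split of the height function, and confirming that their combined effect produces the constant $\alpha_m=c_m/a_m$ asserted in the statement. Everything else---the definition of $\Phi_m$, the formula for $Z_m$, the summation $\sum 1/c_m=C_n$, and the interchange of summation with composition by $\Phi$---is routine bookkeeping once the invariance of (\ref{(BI)}) under uniform scaling, and the linearity of the BC representation in $(F,G)$, have been recorded.
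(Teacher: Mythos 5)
Your proposal is correct and follows essentially the same route as the paper: it exploits the linearity of the Barbishov--Charnikov representation to show $(x/a_m, y/a_m, z/a_m)$ is a BI soliton with data $(F/a_m, G/a_m)$, defines $\Phi_m(x,y)=\Phi\bigl(\frac{x-b_m}{a_m},\frac{y-d_m}{a_m}\bigr)$ and $z_m = z/c_m$, and reduces the decomposition to the trivial summation $\sum_m z_m(r,s) = C_n z(r,s)$ before pulling back through $\Phi$. The paper's proof records exactly the scaled BC data computation and then defers the remaining bookkeeping to the TLMS/minimal-surface case, which is precisely what you reproduce.
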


\begin{proof}

Let $m \leq n $ with $ m \geq 2$ be two integers. $a_m \neq 0$.
\begin{equation*}
	\begin{split}
		& \frac{x}{a_m}(r,s)  = \frac{1}{2} \left( \frac{F(r)}{a_m} + \frac{G(s)}{a_m}  - \int s^2 \frac{G^{\prime}(s)}{a_m}  ds -  \int r^2 \frac{F^{\prime}(r)}{a_m} dr \right)\\
		& \frac{y}{a_m}(r,s)  =  \frac{1}{2} \left(\frac{G(s)}{a_m} -\frac{F(r)}{a_m}  - \int r^2 \frac{F^{\prime}(r)}{a_m} dr + \int s^2 \frac{G^{\prime}(s)}{a_m} ds \right)\\
		& \frac{z}{a_m}(r,s) = \int r \frac{F^{\prime}}{a_m}(r) dr + \int s \frac{G^{\prime}}{a_m}(s) ds.
	\end{split}
\end{equation*}

Then, $(\frac{x}{a_m},\frac{y}{a_m},\frac{z}{a_m}) $ is a $BI$ solition with Barbishov-Charnikov data $(\frac{F}{a_m}, \frac{G}{a_m})$.

Rest of the proof is exactly same as TLMS (or minimal surface) case with $(u,v)$ (or $(\zeta_1, \zeta_2)$)  replaced by $(r,s)$ wherever it appears.
\end{proof}

\section{{\bf Complex Maximal surface and complex Born-Infeld solitons}}
A solution of the equation (~\ref{maxsurf})
where $(x,y,z)$ are complex will be termed as complex maximal surface. 

If we take an $(x,y,z)$ complex and satisfy the minimal surface equation (~\ref{minsurf}) 
then it is easy to see that the transformation $x \rightarrow ix$ and $y \rightarrow iy$ is a complex maximal surface.

Next we show that the height function of the Scherk's second maximal surface (with complex $(x,y,z)$)  has a finite decomposition.

This can be proved using an E-R identity used in ~\cite{DeSi}. But instead we take a short-cut. Thus we have a proposition as a consequence of   thereom (~\ref{scherk}).

\begin{proposition}\label{scherkmax}
Let $z(x,y) = \ln \left ( \frac{{\rm cosh} y}{{\rm cosh} x} \right) $ be the Scherk's second maximal surface.  Let us consider $(x,y,z)$ to be complex.  Then  $z(x,y) = \sum_{m=0}^{n-1} z_m(x,y)$, where $z_m(x,y)=z \left(\frac{x}{n}+ic(m),\frac{y}{n}+ic(m) \right)$, where $c(m) = \frac{2m-n+1}{2n}\pi $. 
\end{proposition}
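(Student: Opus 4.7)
The plan is to deduce this directly from Theorem \ref{scherk} using the elementary identity $\cos(it) = \cosh(t)$, which relates the Scherk second minimal surface to the Scherk second maximal surface by a Wick rotation $x \mapsto ix$, $y \mapsto iy$. No new Euler--Ramanujan computation is needed; the identity will be inherited from the trigonometric case by analytic continuation.

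Concretely, I will write $w(X,Y) = \ln\!\left(\frac{\cos Y}{\cos X}\right)$ for the Scherk second minimal surface and observe that
\[
w(ix, iy) \;=\; \ln\!\left(\frac{\cos(iy)}{\cos(ix)}\right) \;=\; \ln\!\left(\frac{\cosh y}{\cosh x}\right) \;=\; z(x,y).
\]
Since we are allowing $(x,y,z)$ to be complex, this identity is valid as an identity of meromorphic (or multi-valued holomorphic) functions on the appropriate domain; in particular Theorem \ref{scherk}, proved by manipulating the Euler--Ramanujan product identity which holds for complex arguments, applies with $X=ix$ and $Y=iy$.

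Applying Theorem \ref{scherk} to $w$ with $X=ix$, $Y=iy$ gives
\[
w(ix, iy) \;=\; \sum_{m=0}^{n-1} w\!\left(\frac{ix}{n}-c(m),\; \frac{iy}{n}-c(m)\right).
\]
The key algebraic observation is that $i\!\left(\frac{x}{n}+ic(m)\right) = \frac{ix}{n}-c(m)$, and similarly for $y$. Hence
\[
w\!\left(\frac{ix}{n}-c(m),\;\frac{iy}{n}-c(m)\right) \;=\; w\!\left(i\bigl(\tfrac{x}{n}+ic(m)\bigr),\, i\bigl(\tfrac{y}{n}+ic(m)\bigr)\right) \;=\; z\!\left(\tfrac{x}{n}+ic(m),\; \tfrac{y}{n}+ic(m)\right),
\]
using the relation $z(\cdot,\cdot) = w(i\cdot, i\cdot)$ once more. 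Summing over $m$ yields the claimed decomposition.

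The only potential obstacle is checking that the Wick-rotated identity remains valid as a complex identity, i.e.\ that the branches of $\ln$ match up; but since Theorem \ref{scherk} is obtained from Ramanujan's product identity, which is an identity of holomorphic functions in the complex arguments (away from poles of $\sec$/$\operatorname{sech}$), this is immediate on the natural domain. No substantive new work is required beyond the substitution and the simplification $i(a+ib) = ia-b$.
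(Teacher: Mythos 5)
Your proposal is correct and follows essentially the same route as the paper: both deduce the result from Theorem \ref{scherk} (taken as valid for complex arguments) via the Wick rotation $x\mapsto ix$, $y\mapsto iy$ and the identity $\cos(it)=\cosh(t)$. The only cosmetic difference is that you absorb the factor of $i$ by writing $\frac{ix}{n}-c(m)=i\bigl(\frac{x}{n}+ic(m)\bigr)$ directly, whereas the paper first lands on $z\bigl(-\frac{x}{n}-ic(m),-\frac{y}{n}-ic(m)\bigr)$ and then invokes the evenness of $\cosh$; these are equivalent.
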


\begin{proof}

Let $z = \tilde{z}(x,y) = \ln \left ( \frac{{\rm cos} y}{{\rm cos} x} \right) $ be the Scherk's second minimal surface with  $(x,y,z)$ complex.
Note that  the theorem (~\ref{scherk}) holds with $(x,y,z)$ complex. 
By adapting  the theorem  (~\ref{scherk}) with the transformations $x \rightarrow ix$ and $y \rightarrow iy$
we have 
$\tilde{z}(x,y) =   \sum_{m=0}^{n-1} \tilde{z} \left(\frac{x}{n} - c(m), \frac{y}{n} - c(m) \right)$ where $ c(m) = \frac{(2m-n+1)}{2n}\pi$.  Then we replace $x$, $y$ by $ix$ and $iy$ respectively in $\tilde{z} = \tilde{z} (x,y) $ we get  $\tilde{z}(ix, i y)  = z(x,y)$.  Also, we have $\tilde{z}(x,y) = z(ix,iy)$. Thus we have the following.

\begin{equation*}
	\begin{split}
		z&=z(x,y) = \tilde{z}(ix,iy) = \sum_{m=0}^{n-1} \tilde{z} \left(\frac{ix}{n} - c(m), \frac{iy}{n} - c(m) \right)\\
		  & = \sum_{m=0}^{n-1} z\left( i (\frac{ix}{n} - c(m)), i(\frac{iy}{n} - c(m) )\right) = \sum_{m=0}^{n-1} z \left( - \frac{x}{n} - i c(m) , -\frac{y}{n} - i c(m) \right) \\
		  &   = \sum_{m=0}^{n-1} z \left( \frac{x}{n} + i c(m), \frac{y}{n} + i c(m) \right).
	\end{split}
 \end{equation*} 
\end{proof}

The BI equation (~\ref{(BI)}) (with $(x,y,z)$ complex) can be obtained from the minimal surface equation (with $(x,y,z)$ complex ) complex by a Wick rotation $y \rightarrow iy$. 

$z= \ln \left ( \frac{\text{cosh}(y)}{\text{cos}(x)}\right ) $ with $(x,y,z)$ complex  is an example of a  complex BI soliton surface. 
 
  The following result can be proved using an E-R identity used in ~\cite{De}. But we take a shortcut instead.
 
 We have the following propositon as a consequence of   theorem (~\ref{scherk})
 
 \begin{proposition}\label{scherkBI}
 
 Let $z= \ln \left ( \frac{\text{cosh}(y)}{\text{cos}(x)}\right ) $ be a  complex BI soliton surface, i.e. $(x,y,z)$ are complex. Then, 
$ z(x,y) =\sum_{m=0}^{n-1} z\left( \frac{x}{n}- c(m), \frac{y}{n}+ic(m)\right) $ where $ c(m) = \frac{(2m-n+1)}{2n}\pi$. 
 \end{proposition}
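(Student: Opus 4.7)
The plan is to obtain this decomposition directly from Theorem \ref{scherk} by performing a Wick rotation $y\to iy$, bypassing a fresh application of the Euler--Ramanujan identity. The key observation is that the complex Scherk's second minimal height $\tilde{z}(x,y) := \ln(\cos(y)/\cos(x))$ and the BI height $z(x,y) = \ln(\cosh(y)/\cos(x))$ are related by
\begin{equation*}
z(x,y) \;=\; \tilde{z}(x, iy),
\end{equation*}
since $\cos(iy) = \cosh(y)$. Equivalently, $\tilde{z}(a,b) = z(a, -ib)$.

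The next step is to invoke Theorem \ref{scherk} with complex arguments. The Ramanujan identity used in its proof is stated for complex $X,A$ (with $A$ not an odd multiple of $\pi/2$), so the conclusion of Theorem \ref{scherk} is valid for complex $(x,y)$ as well, a point the author already uses in Proposition \ref{scherkmax}. Substituting $iy$ in place of $y$ in the conclusion of Theorem \ref{scherk} therefore yields
\begin{equation*}
\tilde{z}(x, iy) \;=\; \sum_{m=0}^{n-1} \tilde{z}\!\left(\frac{x}{n} - c(m),\; \frac{iy}{n} - c(m)\right).
\end{equation*}

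Finally, I would translate each term back into $z$ using the identity $\tilde{z}(a,b) = z(a, -ib)$. With $a = x/n - c(m)$ and $b = iy/n - c(m)$, the second argument becomes
\begin{equation*}
-i\!\left(\frac{iy}{n} - c(m)\right) \;=\; \frac{y}{n} + i\,c(m),
\end{equation*}
and the left-hand side is $\tilde{z}(x,iy) = z(x,y)$, yielding exactly the claimed decomposition.

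The whole argument is algebraic once the Wick rotation is identified; the only substantive point to verify is that the complex-argument extension of Theorem \ref{scherk} is legitimate, i.e.\ that the condition ``$x$ not an odd multiple of $\pi/2$'' needed to invoke the Ramanujan identity translates to an appropriate nondegeneracy condition on $(x,iy)$. Since the identity in \cite{Br} already allows complex arguments, this is not a genuine obstacle, and the same technique that delivers Proposition \ref{scherkmax} (Wick rotation in both variables) adapts here with a Wick rotation in $y$ alone.
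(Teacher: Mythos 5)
Your proposal is correct and follows essentially the same route as the paper: both deduce the result from Theorem \ref{scherk} (extended to complex arguments) via the Wick rotation $y\to iy$ relating $\tilde z$ and $z$. The only cosmetic difference is that you choose the inverse relation $\tilde z(a,b)=z(a,-ib)$, which lands directly on $\frac{y}{n}+ic(m)$, whereas the paper uses $\tilde z(a,b)=z(a,ib)$ and then appeals to the evenness of $\cosh$ to flip the sign of the second argument.
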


 \begin{proof}
 Let $z = \tilde{z}(x,y) = \ln \left ( \frac{{\rm cos} y}{{\rm cos} x} \right) $ be the Scherk's second minimal surface, with $(x,y,z)$ complex.

By adapting theorem (~\ref{scherk}) to complex $(x,y,z)$  we have 

$\tilde{z}(x,y) =   \sum_{m=0}^{n-1} \tilde{z} \left(\frac{x}{n} - c(m), \frac{y}{n} - c(m) \right)$ where $ c(m) = \frac{(2m-n+1)}{2n}\pi$. 

 If we replace $y$ by $iy$ we get $\tilde{z}(x,iy) = z(x, y)$. We also have $\tilde{z}(x,y) = z(x,iy)$. 
\begin{equation*}
	\begin{split}
		 z &=z(x,y) = \tilde{z}(x,iy) = \sum_{m=0}^{n-1} \tilde{z} \left(\frac{x}{n} - c(m), \frac{iy}{n} - c(m) \right)\\
		  & = \sum_{m=0}^{n-1} z\left(  (\frac{x}{n} - c(m)), i(\frac{iy}{n} - c(m) )\right)= \sum_{m=0}^{n-1} z \left(  \frac{x}{n} -  c(m) , -\frac{y}{n} - i c(m) \right) \\
		  & = \sum_{m=0}^{n-1} z \left( \frac{x}{n} - c(m), \frac{y}{n} + i c(m) \right)
 	\end{split}
 \end{equation*} 
\end{proof}

\section{{\bf Foliation of regions of ${\mathbb E}^3$ using shifted helicoids}}

Let us define a continuous function as follows.

 For each $k \in {\mathbb Z}$,  let $D$ be  the region $(2k-1)\pi \leq x \leq (2k+1)\pi $ minus the points $x = 2k \pi, y=0$. 
 
 On $D$  define $$F(x,y) = (-1)^k Tan^{-1}\frac{y}{x - 2 k \pi}$$ where  $-\frac{\pi}{2} < Tan^{-1}\frac{y}{x} < \frac{\pi}{2}$ be the principal branch of $tan^{-1}\frac{y}{x}$ for  $x,y \in {\mathbb R}$ with $(x,y) \neq (2k \pi,0)$.  
 
 One can check that this is a continuous function and $(x, y, F(x,y)$ is a graph on  $(x,y) \in {\mathbb R}^2 $ minus $(2 k \pi, 0)$, for $k \in {\mathbb Z}$.

Then $(x,y, F(x,y) +t)$ gives a foliation of ${\mathbb E}^3$ minus the  lines $(2 \pi k, 0, z)$, $z \in {\mathbb R}$, $t \in {\mathbb R}$.

 \begin{figure}[!h]
 	\centering
 	\begin{minipage}{.55\textwidth}
 		\centering
 		\includegraphics[width=.5\linewidth]{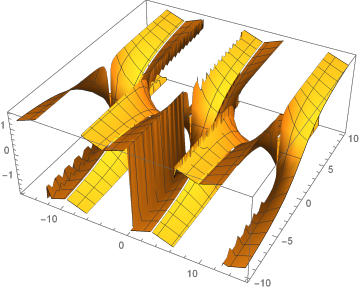}
 		\label{fig:test1}
 	\end{minipage}%
 	\begin{minipage}{.55\textwidth}
 		\centering
 		\includegraphics[width=.5\linewidth]{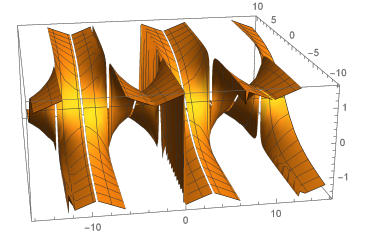}
 		\label{fig:test2}
 	\end{minipage}
 	\captionof{figure}{Foliation by the Shifted Helicoids}
 \end{figure}

 \vskip 1in
 
\section{{\bf Acknowledgement}}

Rukmini Dey would like to acknowledge the support of the DAE, Government of India, under project 12-R$\&$D - TFR-5.10-1100.

\end{document}